\theoremstyle{plain}
\newtheorem{theorem}{Theorem}[section]
\newtheorem{proposition}[theorem]{Proposition}
\theoremstyle{definition}
\theoremstyle{remark}
\newtheorem{remark}[theorem]{Remark}
\numberwithin{equation}{section}
\newcommand{\abs}[1]{\lvert#1\rvert}
\newcommand{\sn}{{\mathbb S}}
\newcommand{\rn}{{\mathbb R}}
\newcommand{\hn}{{\mathbb H}}
\renewcommand{\c}{{\mathbb C}}
\renewcommand{\i}{\sqrt{-1}}
\newcommand{\stc}{{\mathbb S}^{2}(c)}
\newcommand{\SU}{\mathrm{SU}(2)}
\newcommand{\SO}{\mathrm{SO}(3)}
\newcommand{\su}{\mathfrak{su}(2)}
\newcommand{\h}{{\mathcal H}}
\renewcommand{\v}{{\mathcal V}}
\DeclareMathOperator{\spa}{Span} 
\DeclareMathOperator{\ad}{ad}
\begin{document}

\title[Cheeger-Gromoll metrics and Hopf map]
{Generalized Cheeger-Gromoll Metrics \\
and the Hopf map}

\author{Michele Benyounes, Eric Loubeau}
\address{D{\'e}partement de Math{\'e}matiques \\
Universit{\'e} de Bretagne Occidentale \\
6  avenue Victor Le Gorgeu \\
CS 93837, 29238 Brest Cedex 3 \\ France}
\email{Michele.Benyounes@univ-brest.fr, 
Eric.Loubeau@univ-brest.fr}

\author{Seiki Nishikawa}
\address{Mathematical Institute \\ 
Tohoku University \\
Sendai, 980-8578 \\
Japan}
\email{nisikawa@math.tohoku.ac.jp}
\thanks{The third author was supported in part by the 
Grant-in-Aid for Scientific Research (B) of the Japan Society 
for the Promotion of Science, No.\ 20340009. }

\subjclass[2000]{Primary 53C07; Secondary 55R25}


\dedicatory{Dedicated to Professor Udo Simon on his 
seventieth birthday}

\keywords{Hopf map, unit tangent bundle, generalized 
Cheeger-Gromoll metric, Berger metric, anti-de Sitter space}

\begin{abstract}
We show, using two different approaches, that there exists 
a family of Riemannian metrics on the tangent bundle of 
a two-sphere, which induces metrics of constant curvature 
on its unit tangent bundle.
In other words, given such a metric on the tangent bundle
of a two-sphere, the Hopf map is identified with a Riemannian 
submersion from the universal covering space of the unit 
tangent bundle, equipped with the induced metric,
onto the two-sphere.
A hyperbolic counterpart dealing with the tangent bundle 
of a hyperbolic plane is also presented.
\end{abstract}

\maketitle

\section{Introduction}\label{sect:1}
One of the most studied maps in Differential Geometry is 
the Hopf map $H : \sn^{3} \to \c {\mathrm P}^{1}$
from the unit three-sphere $\sn^{3} \subset \c^{2}$ 
onto the complex projective line 
$\c {\mathrm P}^{1} = \c \cup \{\infty\}$, 
defined for $z = (z_{1}, z_{2}) \in \sn^{3}$ by
\begin{equation*}
 H(z) =
 \begin{cases}
  z_{1}/z_{2}  &  \mbox{if $z_{2} \neq 0$},  \\
  \infty  &  \mbox{if $z_{2} = 0$}.
 \end{cases}
\end{equation*}
Composed with the inverse stereographic projection
$p^{-1} : \c \to \sn^{2} \setminus \{(0, 0, 1)\} \subset
\mathbb{R}^{3}$ given by 
\begin{equation*}
 p^{-1}(\zeta) = \left( \frac{2\operatorname{Re}\zeta}
 {|\zeta|^{2} + 1}, \frac{2\operatorname{Im}\zeta}
 {|\zeta|^{2} + 1}, \frac{|\zeta|^{2} - 1}{|\zeta|^{2} + 1}\right),
 \quad  \zeta \in \mathbb{C},
\end{equation*}
it can be regarded as a map $H : \sn^{3} \to \sn^{2}$
sending
\begin{equation}\label{eqn:1.1}
 z = (z_{1}, z_{2})  \mapsto  
 \left(2\operatorname{Re}z_{1}\bar{z}_{2},
 2\operatorname{Im}z_{1}\bar{z}_{2},
 |z_{1}|^{2} - |z_{2}|^{2}\right),
\end{equation}
which, if we choose the two-sphere $\sn^{2}$ to be of radius 
$1/2$, becomes a Riemannian submersion, 
relative to the canonical metric on each sphere.

As is well known, the Hopf map is closely linked to the unit 
tangent bundle $T^{1}\sn^{2} \to \sn^{2}$ of the two-sphere.
Indeed, the total space $T^{1}\sn^{2}$ is diffeomorphic to 
the real projective three-space $\rn \mathrm{P}^{3}$,
and the Hopf map $H : \sn^{3} \to \sn^{2}$ is nothing else 
than the canonical projection from the universal covering space 
of $T^{1}\sn^{2}$ onto $\sn^{2}$.
This shows that a Riemannian metric of constant positive
curvature exists on $T^{1}\sn^{2}$, inherited from the 
canonical metric on $\sn^{3}$.

Then it is a pertinent question whether this constant curvature 
metric on $T^{1}\sn^{2}$ is induced from some ``natural'' 
Riemannian metric defined on the ``ambient'' total space $T\sn^{2}$
of the tangent bundle $T\sn^{2} \to \sn^{2}$ of $\sn^{2}$, 
when one regards the total space of the unit tangent bundle 
$T^{1}\sn^{2}$ as a hypersurface of $T\sn^{2}$.
This question also arises when the three-sphere $\sn^{3}$ is 
equipped with one of the Berger metrics, that is, 
when a homothety is applied on the fibres.

The aim of this paper is to give affirmative answers, 
using generalized Cheeger-Gromoll metrics $h_{m,r}$ defined in
\cite{BLW1} (see Subsection~\ref{sect:3.3} for the precise
definition of $h_{m,r}$), that there is a two-parameter family of 
Riemannian metrics on the tangent bundle of $\sn^{2}$, 
which induces desired metrics for both questions.
Namely, we prove the following 
\begin{theorem}\label{thm:1}
Let \,$\sn^{n}(c)$ be the $n$-sphere of constant curvature $c > 0$, 
and denote by $T\sn^{n}(c)$ $($resp.\ $T^{1}\sn^{n}(c)$$)$ 
its tangent $($resp.\ unit tangent\/$)$ bundle.  
Let $F : \sn^{3}(c/4) \to T^{1}\stc$ be the covering map defined 
by \eqref{eqn:2.8}.

$(1)$  Then $F$ induces an isometry from the projective three-space
$(\rn \mathrm{P}^{3}(c/4), \,g_{\mathrm{can}})$ of 
constant curvature $c/4$ to $T^{1}\stc$, equipped with 
the metric induced from the generalized Cheeger-Gromoll metric 
$h_{m,r}$ on $T \sn^{2}(c)$, 
where $m = \log_{2}c$ and $r \geq 0$.

$(2)$  Similarly, when \,$\sn^{3}$ is equipped with a Berger metric
$g_{\epsilon}$ defined by \eqref{eqn:3.10}, 
$F$ induces an isometry from
$(\rn \mathrm{P}^{3}, \,g_{\epsilon})$ to 
$(T^{1}\sn^{2}(4), \,h_{m,r})$, 
for $m = \log_{2}{\epsilon^{2}} + 2$
and $r \geq 0$.
\end{theorem}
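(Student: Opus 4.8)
The strategy is to compute both sides explicitly and compare. First I would make precise the covering map $F:\sn^{3}(c/4)\to T^{1}\stc$ of \eqref{eqn:2.8}: a point $z=(z_1,z_2)\in\sn^3\subset\c^2$ determines the base point $H(z)\in\stc$ via the (suitably scaled) Hopf map \eqref{eqn:1.1}, and a unit tangent vector at $H(z)$ obtained by differentiating the Hopf fibration, so that $F(z)=(H(z),\,v(z))$ with $v(z)\in T_{H(z)}^1\stc$. The key point is that $F$ is a $2{:}1$ covering, with deck group $\{\pm 1\}$, so it descends to a diffeomorphism $\rn\mathrm{P}^3\to T^1\stc$; thus proving $F$ is a local isometry onto its image is equivalent to proving the claimed isometry from $\rn\mathrm{P}^3$.

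The core computation is to pull back the generalized Cheeger-Gromoll metric $h_{m,r}$ (restricted to $T^1\stc\subset T\stc$) by $F$ and show it equals the constant-curvature metric $g_{\mathrm{can}}$ (part (1)), respectively the Berger metric $g_\epsilon$ (part (2)). I would split $TT\sn^2$ into the horizontal and vertical subbundles $\mathcal H$ and $\mathcal V$ with respect to the Levi-Civita connection of $\stc$, since $h_{m,r}$ is defined via this splitting: on horizontal vectors it agrees with the pullback of the base metric, on vertical vectors it is a conformal-plus-rank-one modification governed by $m$ and $r$, and $\mathcal H\perp\mathcal V$. Correspondingly, $d F$ at $z$ sends the Hopf vertical direction (the fibre direction of $\sn^3\to\stc$) to a combination of horizontal and vertical vectors of $T\sn^2$, and the two horizontal directions of $\sn^3$ to vectors that are "mostly horizontal" in $T\sn^2$; I would write these out. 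Because everything is homogeneous under the $\SU(2)$-action, it suffices to do this at one convenient point, e.g.\ $z=(0,1)$, reducing the whole comparison to a $3\times 3$ matrix identity in the three tangent directions. Choosing $m=\log_2 c$ (resp.\ $m=\log_2\epsilon^2+2$) is exactly what makes the vertical coefficient of $h_{m,r}$ match the length that $dF$ assigns to the Hopf fibre direction, and — crucially — the parameter $r$ drops out because the relevant vertical vector $v(z)$ is \emph{tangent} to $T^1\stc$, i.e.\ orthogonal to the radial/Liouville direction along which the $r$-term of $h_{m,r}$ acts; this is why the conclusion holds for all $r\ge 0$.

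I would then finish by the identification of deck transformations: $F$ intertwines the antipodal map on $\sn^3(c/4)$ with the identity on $T^1\stc$ (the Hopf fibres of $z$ and $-z$ coincide with the same induced unit vector), so $F$ factors through $\rn\mathrm P^3$, and the induced map is the desired isometry onto $(T^1\stc,h_{m,r})$. For part (2) the only change is that the domain metric is rescaled in the fibre direction by $\epsilon$, which shifts the matching value of $m$ by the computed amount; I would also record that the induced metric on the unit tangent bundle has constant curvature precisely in case (1) (since $g_{\mathrm{can}}$ does), whereas $g_\epsilon$ is only a Berger metric.

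The main obstacle I anticipate is bookkeeping the two independent scalings — the radius of the sphere ($\stc$ of curvature $c$, with $\sn^3$ of curvature $c/4$) and the fibre homothety defining $h_{m,r}$ — so that the exponents $m=\log_2 c$ and $m=\log_2\epsilon^2+2$ come out exactly right; a sign or factor-of-$2$ slip in the definition of the Hopf map's differential or in the normalization of $h_{m,r}$ propagates into a wrong value of $m$. A secondary point requiring care is checking that $dF$ actually lands in $TT^1\stc$ (not merely $TT\stc$), i.e.\ that $\|v(z)\|\equiv 1$, which is what legitimizes ignoring the $r$-term; this should follow from the Hopf map being a Riemannian submersion when $\stc$ has the right radius, but it must be verified in the present normalization.
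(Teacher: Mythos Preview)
Your proposal is correct and follows essentially the same approach as the paper's Section~\ref{sect:3}: compute $dF$ on a global frame of $\sn^3(c/4)$, identify the images with horizontal and vertical lifts in $T(T^1\stc)$, and read off that $h_{m,r}$ with $m=\log_2 c$ pulls back to $g_{\mathrm{can}}$ (with the $r$-term vanishing because tangent vectors to $T^1\stc$ satisfy $\langle Y,e\rangle=0$). One small refinement worth noting: the correspondence under $dF$ is cleaner than you anticipate---the Hopf-fibre direction $X_3$ maps to a \emph{purely vertical} vector $-2f^v$, and the Hopf-horizontal directions $X_1,X_2$ map to \emph{purely horizontal} lifts $f^h,e^h$ (Proposition~\ref{prop:2}), so no mixed terms arise and the $3\times 3$ comparison is diagonal.
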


In particular, we see from Theorem~\ref{thm:1}\,(1) that 
any three-sphere of constant positive curvature
is isometrically immersed into the total space of the tangent
bundle of a two-sphere, equipped with a generalized 
Cheeger-Gromoll metric.
A hyperbolic counterpart of this is also true.
Namely, any anti-de Sitter three-space of constant negative
curvature is isometrically immersed into the total space of
the tangent bundle of a hyperbolic plane, equipped 
with an indefinite generalized Cheeger-Gromoll metric.
More precisely, we prove
\begin{theorem}\label{thm:2}
Let $H^{3}_{1}(c)$ be the anti-de Sitter three-space of
constant curvature $-c < 0$.
Let $T\mathbb{H}^{2}(c)$ 
$($resp.\ $T^{1}\mathbb{H}^{2}(c)$$)$
be the tangent $($resp. unit tangent\/$)$ bundle of the
hyperbolic plane $\mathbb{H}^{2}(c)$ of constant curvature
$-c < 0$, and endow $T\mathbb{H}^{2}(c)$ with the indefinite
generalized Cheeger-Gromoll metric $h_{m,r}$ defined by 
\eqref{eqn:5.14}.
Then the covering map $F : H^{3}_{1}(c/4) \to 
T^{1}\mathbb{H}^{2}(c)$ defined by \eqref{eqn:5.8}
is an isometric immersion from $H^{3}_{1}(c/4)$ to
$T^{1}\mathbb{H}^{2}(c)$, equipped with the metric induced
from $h_{m,r}$, where $m = \log_{2}c$ and $r \geq 0$.
\end{theorem}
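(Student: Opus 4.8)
The plan is to mimic the construction of Theorem~\ref{thm:1} in the Lorentzian setting, replacing the sphere bundle picture by the hyperbolic one and carefully tracking signs. First I would set up the relevant models: realize $\hn^{2}(c)$ as one sheet of the hyperboloid in Minkowski $3$-space $\rn^{3}_{1}$, so that $T\hn^{2}(c)$ sits inside $\rn^{3}_{1}\times\rn^{3}_{1}$ and $T^{1}\hn^{2}(c)$ is the subset of pairs $(x,u)$ with $\langle x,x\rangle=-1/c$, $\langle x,u\rangle=0$, $\langle u,u\rangle=1$. On the other side, realize $H^{3}_{1}(c/4)$ as the quadric $\langle y,y\rangle=-4/c$ in $\rn^{4}_{2}$ (signature $(2,2)$), carrying its canonical metric of constant curvature $-c/4$. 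The covering map $F$ of \eqref{eqn:5.8} should then be the Lorentzian analogue of \eqref{eqn:2.8}: identifying $\rn^{4}_{2}$ with a split-complex or paracomplex plane, $F$ sends $y$ to the pair $(x,u)$ built from a ``Hopf-type'' bilinear expression in $y$ and its conjugate, exactly as \eqref{eqn:1.1} does in the elliptic case.

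Next I would compute the pullback metric $F^{*}(h_{m,r})$. The generalized Cheeger-Gromoll metric $h_{m,r}$ on $T\hn^{2}(c)$, defined by \eqref{eqn:5.14}, splits $T_{(x,u)}(T\hn^{2}(c))$ into its horizontal and vertical subspaces (relative to the Levi-Civita connection of $\hn^{2}(c)$), with the horizontal part isometric to $T_{x}\hn^{2}(c)$ and the vertical part a rescaling of it by a factor depending on $m$, $r$, and $|u|^{2}$; along $T^{1}\hn^{2}(c)$ we have $|u|^{2}=1$, so that factor is the constant $1/2^{m}+r$. The key computation is to differentiate $F$, decompose $dF$ into horizontal and vertical components, and check that $F^{*}h_{m,r}$ coincides with the constant-curvature metric on $H^{3}_{1}(c/4)$. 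The choice $m=\log_{2}c$ is precisely what makes the vertical normalization match the fibre size dictated by the Hopf fibration $H^{3}_{1}(c/4)\to\hn^{2}(c)$, while the freedom in $r\ge 0$ is absorbed because on the unit bundle the vertical scaling is frozen to a single value; I would verify that $1/2^{m}+r$ together with the curvature normalization reproduces the needed fibre length, and that the cross terms vanish by the orthogonality relations $\langle x,u\rangle=0$ built into $T^{1}\hn^{2}(c)$.

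The main obstacle I anticipate is twofold. First, bookkeeping of signs and causal types: in $\rn^{4}_{2}$ the fibres of the Hopf-type fibration are timelike circles, the horizontal distribution is of signature $(1,1)$, and one must confirm that $F^{*}h_{m,r}$ has the correct signature $(2,1)$ of $H^{3}_{1}(c/4)$ rather than degenerating or flipping a sign; this is where the indefiniteness of $h_{m,r}$ from \eqref{eqn:5.14} is essential and must be used with care. Second, since $\hn^{2}(c)$ is non-compact and $H^{3}_{1}(c/4)$ is not simply connected in the same way $\rn\mathrm{P}^{3}$ sits under $\sn^{3}$, I would check that $F$ is genuinely a covering onto $T^{1}\hn^{2}(c)$ (degree two, as in the elliptic case) and hence that ``isometric immersion'' is the correct conclusion — it is a local isometry by the metric computation, and a covering by the topological identification of $T^{1}\hn^{2}(c)$, so it is an isometric immersion as claimed. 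Once the pullback computation is in hand, the rest is formal. An alternative, perhaps cleaner route is the Lie-group approach used for Theorem~\ref{thm:1}: write $T^{1}\hn^{2}(c)$ as a homogeneous space of $\SOp$, express $h_{m,r}$ as a left-invariant metric in a suitable moving frame, and recognize the result as a one-parameter (Berger-type) deformation of the bi-invariant metric on $\SOp$, which for the value $m=\log_{2}c$ has constant curvature $-c/4$; I would present whichever of the two computations turns out shorter.
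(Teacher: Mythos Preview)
Your overall plan---compute $dF$, decompose into horizontal and vertical lifts, and check that $F^{*}h_{m,r}$ coincides with the canonical metric on $H^{3}_{1}(c/4)$---is exactly the paper's approach, carried out via the identifications $H^{3}_{1}(1)\cong\mathrm{SU}(1,1)$ and $T^{1}\hn^{2}(c)\cong\SOp$ and the adjoint representation. However, two of your stated expectations are wrong and would derail the computation.

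First, the vertical scaling on $T^{1}\hn^{2}(c)$ is not $1/2^{m}+r$. The tangent space $T_{(x,e)}(T^{1}\hn^{2}(c))$ consists of $X^{h}+Y^{v}$ with $\langle Y,e\rangle=0$, so the $r\langle X,e\rangle\langle Y,e\rangle$ term in \eqref{eqn:5.14} vanishes identically on vectors tangent to the unit bundle. That is the mechanism by which $r$ disappears; it is not that $|u|^{2}=1$ ``freezes'' the scaling to a single $r$-dependent value. The induced vertical factor is $-1/2^{m}$ (note the minus sign built into \eqref{eqn:5.14}), and this is why $m=\log_{2}c$ alone pins down the metric, with $r\ge 0$ completely free.

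Second, your signature bookkeeping is off. The base $\hn^{2}(c)$ is Riemannian, so the horizontal distribution in $T^{1}\hn^{2}(c)$ is positive definite, not of signature $(1,1)$. The timelike direction comes entirely from the vertical side, via the explicit minus sign in the definition \eqref{eqn:5.14} of the indefinite $h_{m,r}$: one gets $h_{m,r}(f^{v},f^{v})=-2^{-m}\langle f,f\rangle<0$. Correspondingly, on $H^{3}_{1}(c/4)$ the Hopf fibre direction $X_{3}$ is timelike while the horizontal span of $X_{1},X_{2}$ is positive definite. There is no need for split-complex or paracomplex coordinates; ordinary complex coordinates $(z_{1},z_{2})$ with $|z_{1}|^{2}-|z_{2}|^{2}=-1$ and the standard $\sn^{1}$-action $z\mapsto e^{\sqrt{-1}t}z$ do the job, exactly as in the elliptic case.
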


The paper is organized as follows.
In Section~\ref{sect:2} we describe the Hopf map 
$\mathbb{S}^{3}(c/4) \to \mathbb{S}^{2}(c)$
in terms of the natural identification of the three-sphere
$\mathbb{S}^{3}(c/4)$ and the unit tangent bundle
$T^{1}\mathbb{S}^{2}(c)$ with Lie groups 
$\mathrm{SU}(2)$ and $\mathrm{SO}(3)$, respectively.
Then, using these descriptions, we prove Theorem~\ref{thm:1}
in Section~\ref{sect:3}.
For this end, we compute the differential of the covering map
$F : \mathbb{S}^{3}(c/4) \to T^{1}\mathbb{S}^{2}(c)$
and find explicitly a suitable induced metric on 
$T^{1}\mathbb{S}^{2}(c)$ making $F$ to be isometric.
An alternative proof of Theorem~\ref{thm:1}, based on 
our previous knowledge of the curvature of generalized 
Cheeger-Gromoll metrics, is presented in Section~\ref{sect:4}.

In Section~\ref{sect:5} we prove a hyperbolic counterpart
of Theorem~\ref{thm:1}\,(1).
Namely, we define the hyperbolic Hopf map
$H^{3}_{1}(c/4) \to \mathbb{H}^{2}(c)$
for the hyperbolic plane, and extend the notion of generalized 
Cheeger-Gromoll metrics to admit indefinite ones.
Then we prove Theorem~\ref{thm:2} by the same method
as in Section~\ref{sect:3}, namely, by identifying the anti-de Sitter 
three-space $H^{3}_{1}(c/4)$ and the unit tangent bundle 
$T^{1}\mathbb{H}^{2}(c)$ with Lie groups 
$\mathrm{SU}(1, 1)$ and $\mathrm{SO}^{+}(1, 2)$,
respectively.

\section{Hopf map}\label{sect:2}
To fix our notation and conventions, 
we first review how one can identify the Hopf map 
$H : \sn^{3} \to \sn^{2}$ with the canonical projection from 
the universal covering space of the unit tangent bundle
$T^{1}\sn^{2}$ onto the $2$-sphere $\sn^{2}$.

To begin with, recall that the unit $3$-sphere
\[
 \sn^3 = \left\{ (x^{1}, x^{2}, x^{3}, x^{4}) \in \rn^{4}
 \mid  (x^{1})^{2} + (x^{2})^{2} + (x^{3})^{2} + 
 (x^{4})^{2} = 1 \right\}
\]
is diffeomorphic to the special unitary group 
\begin{align*}
 \SU   & =  \left\{A \in \mathrm{GL}(2, \c ) \mid
  {}^{t}\bar{A}A  = \mathrm{Id}, \ \det{A} = 1\right\} \\
 & = \left\{
   \begin{pmatrix}
    a  &  -\bar{b}\, \\
    b  &  \bar{a}
  \end{pmatrix}
 \biggm| a, b \in \c, \ \abs{a}^{2} + \abs{b}^{2} =1 \right\}
\end{align*}
under the map
\begin{equation}\label{eqn:2.1}
\begin{aligned}
 \psi : \sn^{3}  & \to \SU, \\
 x = (x^{1}, x^{2}, x^{3}, x^{4})  & \mapsto 
 A_{x} =
  \begin{pmatrix}
  z_{1}  &  - \bar{z}_{2} \\
  z_{2}  &  \bar{z}_{1}
  \end{pmatrix},  
\end{aligned}
\end{equation}
where $z_{1} = x^{1} + \sqrt{-1}x^{2}$ and
$z_{2} = x^{3} + \sqrt{-1}x^{4}$.

Moreover, $\SU$ is the universal covering space of 
the special orthogonal group $\SO$ with the covering map 
\begin{equation*}
 \rho : \SU \to \SO,  \quad  A_{x} \mapsto \rho(A_{x})
\end{equation*}
described as follows.
First, we regard $\SO$ as $\mathrm{SO}(\su)$, 
where the Lie algebra of $\SU$, 
\begin{equation*}
\begin{aligned}
 \su & = \left\{ X \in \mathfrak{gl}(2, \mathbb{C})
 \mid {}^{t}X + \bar{X} = 0, \ \operatorname{Tr}X = 0
 \right\}  \\
 & = \left\{
   \begin{pmatrix}
    \sqrt{-1}x^{3}  &  -x^{2} + \sqrt{-1}x^{1}  \\[0.1cm]
    x^{2} + \sqrt{-1}x^{1}  &  -\sqrt{-1}x^{3}
  \end{pmatrix}
 \biggm| x^{1}, x^{2}, x^{3} \in \mathbb{R} \right\},
\end{aligned}
\end{equation*}
is identified 
with $\mathbb{R}^{3}$, equipped with 
the scalar product 
$\langle X , Y \rangle = - (1/2)\operatorname{Tr}(XY)$, 
so that
\begin{equation}\label{eqn:2.2}
 e_1 = 
  \begin{pmatrix}
  0  &  \i\, \\
   \i  &  0
  \end{pmatrix},  \quad
 e_2 = 
  \begin{pmatrix}
   0 &  - 1 \\
   1 &  0
  \end{pmatrix},  \quad
 e_3 = 
  \begin{pmatrix}
    \i  &  0 \\
   0  &  -\i
  \end{pmatrix}
\end{equation}
form an orthonormal basis of 
$\left(\su, \langle\ , \,\rangle\right)$.
Then $\rho(A_{x})$ is defined by the adjoint representation of
$\SU$ as
\begin{equation}\label{eqn:2.3}
 \rho(A_{x}) :  \su \to \su ,  \quad 
 Y \mapsto \operatorname{Ad}(A_{x})Y =
 A_{x} Y A_{x}^{-1},
\end{equation}
and so $\rho(A_{x}) \in \SO \cong 
\mathrm{SO}\big(\su, \langle\ , \,\rangle\big)$.

The matrix representation of $\rho(A_{x})$, 
with respect to the orthonormal basis \eqref{eqn:2.2} of $\su$,
is given by
\begin{equation}\label{eqn:2.4}
\begin{aligned}
\rho(A_{x})
 & =
 \begin{pmatrix}
      \operatorname{Re}(z_{1}^{2} - \bar{z}_{2}^{2}) &
      \operatorname{Im}(\bar{z}_{1}^{2} + z_{2}^{2})  &
    2 \operatorname{Re}(z_1 \bar{z}_2) \\[0.1cm]
      \operatorname{Im}(z_{1}^{2} - \bar{z}_{2}^{2})  &
      \operatorname{Re}(\bar{z}_{1}^{2} + z_{2}^{2})  &
    2 \operatorname{Im}(z_1 \bar{z}_2) \\[0.1cm]
    - 2 \operatorname{Re}(z_{1}z_{2}) &
    2 \operatorname{Im}(z_{1}z_{2})  &
      \abs{z_{1}}^2 - \abs{z_{2}}^2  
 \end{pmatrix}   \\[0.1cm]
 & = 
 \begin{pmatrix}
  A_{x}e_{1}A_{x}^{-1}  &  A_{x}e_{2}A_{x}^{-1}  &
  A_{x}e_{3}A_{x}^{-1}
 \end{pmatrix}.
 \end{aligned} 
 \end{equation}
Note that $\rho : \SU \to \SO$ is a homomorphism with
kernel $\{\pm \mathrm{Id}\}$, and hence $\SO$ is 
diffeomorphic to the real projective three-space 
$\rn \mathrm{P}^{3}$. 

Given $c > 0$, let $\sn^{n}(c) \subset \rn^{n+1}$ denote 
the $n$-sphere of radius $1/\sqrt{c}$ with center at 
the origin of $\rn^{n+1}$.
We also denote the unit $n$-sphere $\sn^{n}(1)$
simply by $\sn^{n}$.
It should be noted that, with the metric induced from the 
Euclidean metric of $\rn^{n+1}$, $\sn^{n}(c)$ is a space of 
constant positive curvature $c$.

Now, recall that the unit vectors tangent to $\stc$
form the {\em unit tangent bundle}
\begin{equation}\label{eqn:2.5}
\begin{aligned}
 T^{1}\stc  & =  \left\{ (x, v) \in \rn^{3}\times \rn^{3} 
  \mid  x \in \stc, \ v \in T_{x}\stc, \ \abs{v} = 1\right\}  \\
 & =  \left\{ (x, v) \in \rn^{3}\times \rn^{3} \mid
  \abs{x} = 1/\sqrt{c}, \ \abs{v} = 1, \ \langle x, v \rangle = 0
  \right\}
\end{aligned}
\end{equation}
of $\stc$ with the canonical projection $\pi : T^{1}\stc \to \stc$
given by $\pi(x, v) = x$.
Since $T^{1}\stc$ is composed of orthogonal vectors of $\rn^3$, 
one can define the diffeomorphism
\begin{equation}\label{eqn:2.6}
 \phi : \SO \to T^{1}\stc , \quad  (c_{1}\ \,c_{2}\ \,c_{3}) 
 \mapsto  (c_{3}/\sqrt{c}, \,c_{1}). 
\end{equation}
Finally, let $\iota$ be the homothety defined by
\begin{equation}\label{eqn:2.7}
 \iota : \sn^{3}(c/4) \to \sn^{3}(1),  \quad
 2x/\sqrt{c} \mapsto x.  
\end{equation}

Then we have the following
\begin{proposition}
The composition of the covering map
\begin{equation}\label{eqn:2.8}
 F = \phi \circ \rho \circ \psi \circ \iota : \sn^{3}(c/4)
 \to T^{1}\stc  
\end{equation}
with the canonical projection $\pi : T^{1}\stc \to \stc$
is identical with the Hopf map $H : \sn^3(c/4) \to \stc$.
\end{proposition}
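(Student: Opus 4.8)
The plan is to trace a point through the composition $F = \phi \circ \rho \circ \psi \circ \iota$ and compare the resulting first coordinate with the Hopf map formula \eqref{eqn:1.1}, suitably rescaled. Concretely, I would start with a point $2x/\sqrt{c} \in \sn^{3}(c/4)$ where $x = (x^{1}, x^{2}, x^{3}, x^{4}) \in \sn^{3}(1)$, so that $\iota$ sends it to $x$, and then $\psi$ sends $x$ to the matrix $A_{x} \in \SU$ with entries $z_{1} = x^{1} + \i x^{2}$, $z_{2} = x^{3} + \i x^{4}$ as in \eqref{eqn:2.1}.

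Next I would apply $\rho$ to obtain the matrix in \eqref{eqn:2.4}, whose columns are $A_{x}e_{1}A_{x}^{-1}$, $A_{x}e_{2}A_{x}^{-1}$, $A_{x}e_{3}A_{x}^{-1}$. In particular, the third column is
\begin{equation*}
 c_{3} = \bigl( 2\operatorname{Re}(z_{1}\bar{z}_{2}),
 \,2\operatorname{Im}(z_{1}\bar{z}_{2}),
 \,\abs{z_{1}}^{2} - \abs{z_{2}}^{2} \bigr).
\end{equation*}
Now $\phi$ of this orthogonal frame is, by \eqref{eqn:2.6}, the pair $(c_{3}/\sqrt{c}, c_{1}) \in T^{1}\stc$, so that the canonical projection $\pi$ picks out
\begin{equation*}
 \pi \circ F \bigl( 2x/\sqrt{c} \bigr) = c_{3}/\sqrt{c}
 = \frac{1}{\sqrt{c}}\bigl( 2\operatorname{Re}(z_{1}\bar{z}_{2}),
 \,2\operatorname{Im}(z_{1}\bar{z}_{2}),
 \,\abs{z_{1}}^{2} - \abs{z_{2}}^{2} \bigr).
\end{equation*}
Comparing with \eqref{eqn:1.1}, the right-hand side is exactly $1/\sqrt{c}$ times the image of $(z_{1}, z_{2})$ under the Hopf map $\sn^{3}(1) \to \sn^{2}(1)$; since a point of $\stc$ is by definition a point of $\rn^{3}$ of norm $1/\sqrt{c}$, this is precisely the point of $\stc$ assigned by the Hopf map $H : \sn^{3}(c/4) \to \stc$ after composing with the homothety $\iota$. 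Hence $\pi \circ F = H$.

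There is essentially no serious obstacle here: the argument is a bookkeeping exercise that amounts to identifying the third column of \eqref{eqn:2.4} with the (scaled) Hopf formula \eqref{eqn:1.1}. The only point requiring a small amount of care is normalization — keeping track of the radius $1/\sqrt{c}$ of $\stc$ and the radius $2/\sqrt{c}$ of $\sn^{3}(c/4)$, and checking that $c_{3}$ has the correct unit norm (which it does, being a column of an orthogonal matrix) so that $c_{3}/\sqrt{c}$ indeed lands on $\stc$. One should also note that $c_{1}$, which records the fibre direction, plays no role for the composition with $\pi$, but its appearance confirms that $F$ itself is a genuine lift of $H$ to the unit tangent bundle. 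This completes the proof.
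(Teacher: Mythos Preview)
Your proof is correct and follows exactly the same approach as the paper: trace a point through the four maps $\iota$, $\psi$, $\rho$, $\phi$ using the explicit formulae \eqref{eqn:2.1}--\eqref{eqn:2.7}, read off the third column of \eqref{eqn:2.4}, and identify $c_{3}/\sqrt{c}$ with the rescaled Hopf formula \eqref{eqn:1.1}. The paper compresses all of this into a single displayed line, but your more detailed bookkeeping (including the check that $c_{3}$ has unit norm) is precisely the content behind it.
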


Indeed, from \eqref{eqn:2.1} through \eqref{eqn:2.7}, 
we see that the composition $\pi \circ F$ is a map sending
\begin{equation*}
 (2/\sqrt{c})(z_{1}, z_{2})  \mapsto  (1/\sqrt{c})
 \left(2z_1 \bar{z}_2 , |z_{1}|^{2} - |z_{2}|^{2}\right),  
\end{equation*}
which is nothing but the {\em Hopf map} $H$ of \eqref{eqn:1.1} 
normalized in our context.

\section{Differential approach}\label{sect:3}
The most direct path to an answer to our problem is 
to compute the differential of the covering map 
$F: \sn^3(c/4) \to T^{1}\stc$, 
determine the image of an orthonormal frame of $T\sn^3(c/4)$, 
and then find explicitly a suitable induced metric on $T^{1}\stc$ 
making $F$ to be isometric.
This can be carried out as follows.

\subsection{Differentials of maps}\label{sect:3.1}
1) The map $\psi : \sn^{3} \to \SU$ in \eqref{eqn:2.1}
gives to to  a linear map from $\rn^{4}$ into the space of
complex $2 \times 2$ matrices of the form
$\begin{pmatrix}
    a  &  -\bar{b}\, \\
    b  &  \bar{a}
  \end{pmatrix}
$, 
so that $d\psi_{x} = \psi$ for all $x \in \rn^{4}$.

Noting that the fibres of the Hopf map \eqref{eqn:1.1} are
described as the orbits of the $\sn^{1}$-action 
$\sn^{1} \times \sn^{3} \to \sn^{3}$ on $\sn^{3}$
defined by
\begin{equation*}
\big(e^{\i t} , (z_{1}, z_{2})\big) \mapsto 
e^{\i t} (z_{1}, z_{2}) = 
\big(e^{\i t}z_{1}, e^{\i t}z_{2} \big),
\end{equation*}
we see that if $x = (x^{1}, x^{2}, x^{3}, x^{4}) \in \sn^{3}$, 
then 
\[
 X_{3}(x) = (\i z_{1}, \i z_{2}) = (- x^{2}, x^{1}, - x^{4}, x^{3})
\]
is a vector tangent to a fibre of the Hopf map
$H : \sn^{3}(1) \to \sn^{2}(4)$, and
\begin{equation*}
 X_{3}(x),  \quad
 X_{2}(x) = (- x^{3}, x^{4}, x^{1}, - x^{2}),  \quad 
 X_{1}(x) = (- x^{4}, - x^{3}, x^{2}, x^{1})
\end{equation*}
form a global orthonormal frame of $T\sn^3$. 
Since
$\psi(x) = A_{x} =
  \begin{pmatrix}
  z_{1}  &  - \bar{z}_{2} \\
  z_{2}  &  \bar{z}_{1}
 \end{pmatrix}$,
it follows that
\[
 d\psi_{x} = \psi : T_{x} \sn^{3} \to T_{\psi(x)}(\SU)
  = A_{x} \cdot \su
\]
and
\begin{equation}\label{eqn:3.1}
 d\psi_{x}(X_{3}(x)) =
 \begin{pmatrix}
  - x^{2} + \i x^{1} & x^{4} + \i x^{3} \\[0.1cm]
  - x^{4} + \i x^{3} & - x^{2} - \i x^{1}
 \end{pmatrix}
 = A_{x} e_3 .
\end{equation}
Similarly, we have $d\psi_{x}(X_{2}(x)) = A_{x} e_2$ and 
$d\psi_{x}(X_{1}(x)) = A_{x} e_1$.

2) The differential of the covering map
\begin{equation*}
 \rho :  \SU \to \SO,  \quad  A_{x} \mapsto \rho(A_{x}),
\end{equation*}
given by \eqref{eqn:2.3}, is a linear map
\begin{equation*}
 d\rho_{A_{x}} :  T_{A_{x}}(\SU) = A_{x} \cdot \su 
 \to T_{\rho(A_{x})}\SO
 = \rho(A_{x}) \cdot \mathfrak{so}(3)
\end{equation*}
defined by
\begin{equation}\label{eqn:3.2}
 A_{x} Y \mapsto d\rho_{A_{x}}(A_{x} Y)
 = \rho(A_{x}) \circ\ad(Y),
\end{equation}
where
\begin{equation*}
 \ad(Y) : \su \to \su,  \quad  Z \mapsto \ad(Y)(Z) = [Y, Z].
\end{equation*}

Consequently, for the orthonormal basis \eqref{eqn:2.2} of
$\mathfrak{su}(2)$, we obtain, for instance,
\begin{align*}
 d\rho_{A_{x}} : A_{x} \cdot \su  & \to \rho(A_{x}) \cdot 
 \mathfrak{so}(3),   \quad 
 A_{x} e_{3} \mapsto \rho(A_{x}) \circ \ad(e_{3}),
\end{align*}
and $\ad(e_{3})(e_{3}) =0$, $\ad(e_{3})(e_{2}) =-2e_{1}$, 
$\ad(e_{3})(e_{1}) =2e_{2}$.
Therefore, as a matrix,
\begin{equation*}
\ad(e_{3}) =
 \begin{pmatrix}
  0  &  -2  & 0 \\
  2  &  0  & 0 \\
  0  &  0  & 0
 \end{pmatrix},
\end{equation*}
and
\begin{equation*}
 \rho(A_{x}) \circ \ad(e_{3}) = 
 \begin{pmatrix}
  2A_{x} e_{2} A_{x}^{-1}  &  -2A_{x} e_{1} A_{x}^{-1}  &  0
 \end{pmatrix}.
\end{equation*}
Similarly, since $\ad(e_{2})(e_{1}) = -2e_{3}$, we obtain
\begin{align*} 
 \rho(A_{x})\circ \ad(e_{2})  & = 
 \begin{pmatrix}
  -2A_{x} e_{3} A_{x}^{-1}  &  0  &  2 A_{x} e_{1} A_{x}^{-1}
 \end{pmatrix},  \\
 \rho(A_{x})\circ \ad(e_{1})  & = 
 \begin{pmatrix}
  0  &  2A_{x} e_{3} A_{x}^{-1}  &  -2A_{x} e_{2} A_{x}^{-1}
\end{pmatrix}.
\end{align*}

3) Finally, we note that the diffeomorphism
\begin{equation*} 
 \phi : \SO \to T^{1}\stc  
\end{equation*}
defined by \eqref{eqn:2.6} is linear, so $d\phi_{g} = \phi$ 
and, for $\rho(A_{x}) \in \SO$
\begin{equation*}
 d\phi_{\rho(A_{x})} = 
 \phi : T_{\rho(A_{x})}\SO = 
  \rho(A_{x}) \cdot \mathfrak{so}(3)
  \to T_{\phi(\rho(A_{x}))}(T^{1}\stc)
\end{equation*}
is given by
\begin{equation*}
 \left(\alpha_{1} \ \,\alpha_{2} \ \,\alpha_{3}\right)
 \mapsto (\alpha _{3}/\sqrt{c}, \,\alpha_{1}).
\end{equation*}
Therefore we obtain
\begin{equation}\label{eqn:3.3}
\begin{aligned}
 d\phi_{\rho(A_{x})}(\rho(A_{x})\circ \ad(e_{3})) 
 & = (0, \,2A_{x} e_{2} A_{x}^{-1}) = \tilde{e}_{3},  \\
 d\phi_{\rho(A_{x})}(\rho(A_{x}) \circ \ad(e_{2})) 
 & = (2A_{x} e_{1} A_{x}^{-1}/\sqrt{c}, 
 \,-2A_{x} e_{3} A_{x}^{-1}) = \tilde{e}_{2},  \\
 d\phi_{\rho(A_{x})}(\rho(A_{x})\circ \ad(e_{1})) 
 & = (-2A_{x} e_{2} A_{x}^{-1}/\sqrt{c}, \,0)
 = \tilde{e}_{1}.
 \end{aligned}
\end{equation}

\subsection{Lifts to the unit tangent bundle}\label{sect:3.2}
In general, each tangent space of the tangent bundle $TM$ of
a Riemannian manifold $(M, g)$ admits a canonical decomposition
into its vertical and horizontal subspaces.
Indeed, given a point $(x, e) \in TM$,
the kernel of the differential of the canonical projection 
$\pi : TM \to M$ defines the vertical space
$\v_{(x,e)} = \ker d\pi_{(x,e)}$,
while the horizontal space $\h_{(x,e)}$ is given by the kernel of 
the connection map
\[
 K_{(x,e)} =K : T_{(x,e)}TM \to T_{x} M ,  \quad 
 K(Z) = d (\exp_{x} \circ R_{-e} \circ\tau)(Z).
\]
Here $\tau : U \subset TM \to T_{x} M$ is the map,
defined on an open neighbourhood $U$ of $(x, e) \in TM$,
sending a vector $v\in T_{y} M$, with $(y,v) \in U$, 
to a vector in $T_{x} M$ by parallel transport along 
the unique geodesic arc from $y$ to $x$.
The map $R_{-e} :T_{x}M \to T_{x}M$ is the translation 
given by $R_{-e}(X) = X - e$ for $X \in T_{x} M$. 

One can see that $\h_{(x,e)}\cap \v_{(x,e)} = \{0\}$ 
and $\h_{(x,e)} \oplus \v_{(x,e)}=T_{(x,e)}TM$, and 
define the {\em horizontal lift} $X^h \in\h_{(x, e)}$ and 
the {\em vertical lift} $X^{v} \in\v_{(x, e)}$ of 
$X \in T_{x} M$ by
\[ 
 K_{(x,e)}(X^{v}) = X,  \quad  
 d\pi_{(x,e)}(X^{h} ) = X.
\]
An alternative description of the horizontal lift $X^{h}$ is 
given as follows.
Let $X \in T_{x} M$ and choose $e\in T_{x} M$. 
Take a curve $\gamma : I \to M$ such that
$\gamma(0) = x$ and $\dot{\gamma }(0) = X$.
(Since the result is independent of the curve chosen, 
we can take it to be a geodesic.) 
Let $\Gamma : I \to TM$ be a unique curve in $TM$ such that 
$\Gamma(0) = (x, e)$ and $\Gamma(t)$ is parallel to 
$\dot{\gamma}(t)$ in the sense that 
$\nabla_{\dot{\gamma }(t)}\Gamma(t) = 0$ for all $t\in I$.
Namely, $\Gamma(t) = (\gamma(t), v(t))$, where 
$v(t) \in T_{\gamma(t)}M$ and
$\nabla_{\dot{\gamma }(t)}v(t) = 0$ for all $t\in I$, 
so that $v(t)$ is the parallel transport of the vector $e$ 
along the curve $\gamma$. 
Then $\dot{\Gamma}(0) = X^{h} \in T_{(x,e)}TM$. 
We will use this approach below.

Now, recall that the unit tangent bundle $T^{1} \stc$ is 
a $3$-dimensional hypersurface of $T \stc$.
Then we note that at $(x, e) \in T^{1} \stc$ the tangent 
space of the tangent bundle $T \stc$ is written as
\[
 T_{(x,e)}(T \stc) = \big\{ X^{h} + Y^{v} \mid 
 X, Y \in T_{x} \stc \big\},
\]
where $X^{h}$ (resp.\ $Y^{v}$) is the horizontal 
(resp.\ vertical) lift of $X$ (resp.\ $Y$).
Also, that of the unit tangent bundle $T^{1}\mathbb{S}^{2}(c)$
is given by
\begin{equation}\label{eqn:3.4}
 T_{(x,e)}(T^{1} \stc) = \big\{ X^{h} + Y^{v} \mid 
 X, Y \in T_{x} \stc, \ \langle Y, e \rangle = 0 \big\},
\end{equation}
since the tangent vector at $(x, e)$ of 
any vertical curve on $T^{1} \stc$ must be orthogonal 
to $e$.

For the covering map $F: \sn^3(c/4) \to T^1 \stc$, 
we obtain from \eqref{eqn:2.8} together with \eqref{eqn:3.1}
through \eqref{eqn:3.3} that
\begin{equation}\label{eqn:3.5}
 dF_{x} (2X_{3}(x)/\sqrt{c}) = \tilde{e}_{3},  \quad
 dF_{x} (2X_{2}(x)/\sqrt{c}) = \tilde{e}_{2},  \quad
 dF_{x} (2X_{1}(x)/\sqrt{c}) = \tilde{e}_{1},
\end{equation}
and recall that 
\[
 F(2x/\sqrt{c}) = (\tilde{x}, e) \in T^{1} \stc
\]
for each $2x/\sqrt{c} \in \sn^{3}(c/4)$,
where $\tilde{x} = (1/\sqrt{c}) A_{x}e_{3}A_{x}^{-1}$ 
and $e = A_{x}e_{1} A_{x}^{-1}$.
We set
\[
 f = - A_{x}e_{2}A_{x}^{-1}.
\]
Then $( \tilde{x}, f) \in T^{1}\stc$ and 
$\langle f, e \rangle = 0$,
so that, by virtue of \eqref{eqn:3.4},
\[
 T_{(\tilde{x}, e)}(T^{1} \stc) 
 = \spa \big\{ e^{h}, f^{h}, f^{v} \big\}.
\]

Now, we are going to show
\begin{proposition}\label{prop:2}
Let \,$\tilde{x}$, $e$ and $f$ be as above.  Then
\begin{equation}\label{eqn:3.6}
 (\sqrt{c}/2) \tilde{e}_{2}= e^{h},  \quad  
 (\sqrt{c}/2)\tilde{e}_{1}=f^{h},  \quad  
 \tilde{e}_{3} = -2 f^{v}.
\end{equation}
\end{proposition}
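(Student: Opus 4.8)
The plan is to compute the three lifts $e^h$, $f^h$, $f^v$ directly in the ambient space $\rn^3 \times \rn^3$, using the curve description of horizontal lifts recalled in Subsection~\ref{sect:3.2}, and then compare with the explicit formulas for $\tilde e_1, \tilde e_2, \tilde e_3$ in \eqref{eqn:3.3}. Throughout I work at the point $(\tilde x, e)$ with $\tilde x = (1/\sqrt c)A_x e_3 A_x^{-1}$, $e = A_x e_1 A_x^{-1}$, $f = -A_x e_2 A_x^{-1}$. The key algebraic fact I will use repeatedly is that $(A_x e_1 A_x^{-1}, A_x e_2 A_x^{-1}, A_x e_3 A_x^{-1})$ is an orthonormal frame of $\rn^3 \cong \su$ (being the image under $\rho(A_x) \in \SO$ of the orthonormal basis \eqref{eqn:2.2}), and that their cross products cycle appropriately; equivalently, $\{\sqrt c\,\tilde x, e, f\}$ is a positively oriented orthonormal frame.

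First I would handle the vertical lift, which is the easiest. A vertical curve through $(\tilde x, e)$ in the direction of $f \in T_{\tilde x}\stc$ is $t \mapsto (\tilde x, \cos t\, e + \sin t\, f)$, staying in $T^1\stc$; its velocity at $t = 0$ is $f^v$, which in ambient coordinates is the pair $(0, f) = (0, -A_x e_2 A_x^{-1})$. Comparing with the third line of \eqref{eqn:3.3}, $\tilde e_3 = (0, 2A_x e_2 A_x^{-1}) = -2(0, -A_x e_2 A_x^{-1}) = -2 f^v$, which is the third identity in \eqref{eqn:3.6}.

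Next the horizontal lifts. Here I would use that horizontal lifts of $\stc$ can be computed from geodesics of the round sphere: to lift a vector $X \in T_{\tilde x}\stc$ starting from $(\tilde x, e)$, take the great-circle geodesic $\gamma$ with $\gamma(0) = \tilde x$, $\dot\gamma(0) = X$, parallel-transport $e$ along it to get $v(t)$, and set $X^h = \frac{d}{dt}\big|_{0}(\gamma(t), v(t))$. On a sphere of radius $1/\sqrt c$, parallel transport along a geodesic in the plane spanned by $\tilde x$ and a unit tangent vector $u$ rotates vectors in that plane and fixes vectors orthogonal to it; differentiating, one finds the ambient expression $X^h = (X, -c\langle X, e\rangle_{\rn^3}\,\tilde x)$ for $X \in T_{\tilde x}\stc$ — I would verify this small formula, or cite it from the standard description of the Sasaki-type geometry on $T\stc$. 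Applying it to $X = e$: since $\langle e, e\rangle = 1$ we get $e^h = (e, -c\,\tilde x) = (A_x e_1 A_x^{-1}, -\sqrt c\, A_x e_3 A_x^{-1})$. Comparing with the second line of \eqref{eqn:3.3}, $\tilde e_2 = (2 A_x e_1 A_x^{-1}/\sqrt c, -2 A_x e_3 A_x^{-1}) = (2/\sqrt c)(A_x e_1 A_x^{-1}, -\sqrt c\, A_x e_3 A_x^{-1}) = (2/\sqrt c) e^h$, i.e. $(\sqrt c/2)\tilde e_2 = e^h$. Applying the formula to $X = f$: since $\langle f, e\rangle = 0$ we get $f^h = (f, 0) = (-A_x e_2 A_x^{-1}, 0)$, and the first line of \eqref{eqn:3.3} gives $\tilde e_1 = (-2 A_x e_2 A_x^{-1}/\sqrt c, 0) = (2/\sqrt c) f^h$, i.e. $(\sqrt c/2)\tilde e_1 = f^h$.

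The main obstacle is pinning down the ambient expression $X^h = (X, -c\langle X, e\rangle\tilde x)$ for the horizontal lift — everything else is bookkeeping with the orthonormal frame $\{A_x e_i A_x^{-1}\}$. This requires care about sign conventions in the connection map $K$ (note the definition uses $R_{-e}$), about the fact that the second component must indeed be tangent to $\stc$ at $\tilde x$ minus the connection correction, and about the radius $1/\sqrt c$ entering the curvature. Concretely I would take $\gamma(t) = \cos(\sqrt c\, t)\,\tilde x + (1/\sqrt c)\sin(\sqrt c\, t)\,u$ with $u = X/|X|$, write the parallel field $v(t)$ explicitly as the obvious rotation of $e$ in the $\{\tilde x, u\}$-plane together with the fixed orthogonal part, and differentiate at $t = 0$; the cross term produces exactly $-c\langle X, e\rangle\tilde x$ in the vertical component. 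Once that lemma is in place, the three comparisons above are immediate and complete the proof of \eqref{eqn:3.6}.
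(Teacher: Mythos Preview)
Your argument is correct. For the two horizontal lifts you follow exactly the paper's route: take the great circle $\gamma(t)=\cos(\sqrt c\,t)\,\tilde x+(1/\sqrt c)\sin(\sqrt c\,t)\,u$, parallel-transport $e$ along it, and differentiate $(\gamma,v)$ at $t=0$. Your packaging of this as the single ambient formula $X^h=(X,\,-c\langle X,e\rangle\,\tilde x)$ is a minor streamlining (it follows in one line from $\dot v(0)=-c\langle e,\dot\gamma(0)\rangle\,\gamma(0)$, since a parallel field on $\stc\subset\rn^3$ has ambient derivative proportional to the position vector), but the substance is identical to the paper's two separate computations.

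Where you genuinely diverge is in the vertical part. The paper establishes $\tilde e_3=-2f^v$ by pushing the Hopf-fibre geodesic $t\mapsto e^{\sqrt{-1}t}x$ forward through $F$ and then evaluating the connection map $K(\tilde e_3)$ via an explicit geodesic/exponential computation in $\stc$, tracking $W(t)=A_{\gamma(t)}e_1A_{\gamma(t)}^{-1}-A_xe_1A_x^{-1}$ and its norm $2\sin t$. Your approach bypasses all of this: you simply observe that the fibre curve $t\mapsto(\tilde x,\cos t\,e+\sin t\,f)$ has velocity $(0,f)$ at $t=0$, and that in ambient coordinates the vertical lift at $(\tilde x,e)$ is literally $(0,\cdot)$ (since the base point is fixed, $\tau$ is the identity and $d\exp_{\tilde x}|_0=\mathrm{id}$, so $K((0,f))=f$). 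This is both shorter and conceptually cleaner; it is in fact the method the paper itself adopts later in the hyperbolic case (Proposition~\ref{prop:4}). What the paper's longer computation buys is an explicit verification, starting from the $\sn^3$ side, that $dF$ really sends the Hopf-fibre direction to a vertical vector with the claimed $K$-image --- but since $\tilde e_3=(0,\,2A_xe_2A_x^{-1})$ is already manifestly vertical from \eqref{eqn:3.3}, your shortcut loses nothing.
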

\begin{proof}
To construct the horizontal lift 
$e^{h} \in T_{(\tilde{x}, e)}(T^{1} \stc)$, 
we take the great circle $\gamma$ in $\stc$ such that
$\gamma(0)=\tilde{x}$ and $\dot{\gamma }(0)= e$, 
that is,
\[
 \gamma(t) = \cos(\sqrt{c}\,t)\,\tilde{x} + 
 \sin(\sqrt{c}\,t)({e}/{\sqrt{c}}).
\]
Then the curve $\Gamma : I \to T^{1} \stc$ given by 
$\Gamma(t) = (\gamma(t), \dot{\gamma}(t))$ is parallel to 
$\dot{\gamma}(t)$, so that $e^{h} = \dot{\Gamma}(0) =
(\dot{\gamma}(0),\ddot{\gamma}(0))$.
Namely,
\[
 e^{h} 
 = (A_{x} e_{1} A_{x}^{-1}, \,-\sqrt{c}A_{x}e_{3}A_{x}^{-1}) 
 = (\sqrt{c}/2)\tilde{e}_{2}.
\]

Similarly, to construct $f^{h} \in T_{(\tilde{x}, e)}(T^{1} \stc)$ 
for $f = - A_{x}e_{2}A_{x}^{-1}$, 
we take the great circle $\gamma(t) = 
\cos(\sqrt{c}\,t)\,\tilde{x} + \sin(\sqrt{c}\,t)({f}/\sqrt{c})$, 
so that $\gamma(0)=\tilde{x}$ and $\dot{\gamma }(0) = f$. 
Then the curve $\Gamma : I \to T^{1} \stc$ given by 
$\Gamma(t) = (\gamma(t), v(t) = e)$ satisfies 
$\nabla_{\dot{\gamma}(t)}v(t) = 0$ for all $t \in I$. 
Hence
\[
 f^{h} =\dot{\Gamma}(0) = (f, 0) = (- A_{x}e_{2}A_{x}^{-1}, \,0) 
 = (\sqrt{c}/2)\tilde{e}_{1}.
\]

Finally, since $d\pi(\tilde{e}_{3}) = 0$, 
to show that $\tilde{e}_{3} = -2f^{v}$ we compute $K(\tilde{e}_{3})$. 
Since $\tilde{e}_{3} = dF_{x} (2X_{3}/\sqrt{c})$ and 
$X_{3} = \dot{\gamma}(0)$ for $\gamma(t) = e^{\sqrt{-1}t}x$,
which is indeed a geodesic of $\sn^{3}$ along a fibre of the Hopf map, 
we can write $\tilde{e}_{3}$ as a vector tangent to a curve
$\tilde{\gamma}(t) = F\circ(2/\sqrt{c})\gamma(t)$
in $T^{1}\sn^{2}(c)$ and then
\begin{equation}\label{eqn:3.7}
 K(\tilde{e}_{3}) = \left.\frac{d}{dt}\right|_{t=0} 
 (\exp_{\tilde{x}} \circ R_{-e} \circ \tau)
 (\tilde{\gamma}(t)).
\end{equation}
Also, it is immediate from \eqref{eqn:2.4} and \eqref{eqn:2.6} that
\[
 \tilde{\gamma}(t) = \big(({1}/{\sqrt{c}}) A_{x}e_{3}A_{x}^{-1}, 
 \,A_{\gamma(t)} e_{1} A_{\gamma(t)}^{-1} \big)
 \in T^{1}\sn^{2}(c)
\]
and $\pi(\tilde{\gamma}(t)) = \tilde{x}$, so that
$\tilde{\gamma}(t)$ is a curve along the fibre over $\tilde{x}$.
Consequently, the parallel transport $\tau$ in \eqref{eqn:3.7} 
is the identity map, and 
\begin{equation*}
 K(\tilde{e}_{3}) = \left.\frac{d}{dt}\right|_{t=0}
 \left(\exp_{\tilde{x}}\left(\frac{1}{\sqrt{c}} 
 A_{x} e_{3} A_{x}^{-1},
 \,A_{\gamma(t)} e_{1} A_{\gamma(t)}^{-1}
  - A_{x} e_{1} A_{x}^{-1} \right)
 \right),
\end{equation*}
since $e = A_{x}e_{1}A_{x}^{-1}$.

Put $W(t) = A_{\gamma(t)} e_{1} A_{\gamma(t)}^{-1}
- A_{x} e_{1} A_{x}^{-1}$. 
Then the geodesic of $\stc$ starting at $\tilde{x}$ with initial 
vector $W(t)$ is given by
\begin{equation*}
 \delta_{t}(s) = \frac{1}{\sqrt{c}} A_{x}e_{1}A_{x}^{-1}
 \,\cos(\sqrt{c}\,\abs{W(t)}s) 
 + \frac{1}{\sqrt{c}} \frac{W(t)}{\abs{W(t)}}
 \sin(\sqrt{c}\,\abs{W(t)}s),
\end{equation*}
and $K(\tilde{e}_{3}) = ({d}/{dt})|_{t=0}\,\delta_{t}(1)$. 
On the other hand, since
\begin{equation*}
 \gamma(t) = (x^{1}\cos t - x^{2}\sin t, x^{2}\cos t 
 + x^{1}\sin t, x^{3}\cos t - x^{4}\sin t, x^{4}\cos t 
 + x^{3}\sin t),
\end{equation*}
we have
\begin{align*}
 W(t)  & = A_{\gamma(t)}e_{1}A_{\gamma(t)}^{-1}
  - A_{x}e_{1}A_{x}^{-1}   \\
 & = \begin{pmatrix}
 - 4(-x^1 x^3 +x^2x^4)\sin^2 t + 2(x^1x^4 + x^2x^3)
 \sin 2t   \\[0.1cm]
 - 4(x^1 x^2 +x^3x^4)\sin^2 t + ((x^1)^2 - (x^2)^2
  + (x^3)^2 - (x^4)^2)\sin 2t  \\[0.1cm]
 - 2((x^1)^2 - (x^2)^2 - (x^3)^2 + (x^4)^2)\sin^2 t 
  - 2(x^1 x^2 -x^3x^4)\sin 2t
\end{pmatrix}
\end{align*}
and $\abs{W(t)} = 2 \sin t$.

Therefore we obtain
\begin{align*}
 K(\tilde{e}_{3})   & =  \left.\frac{d}{dt}\right|_{t=0} 
 \left(\frac{1}{\sqrt{c}} A_{x} e_{3}A_{x}^{-1}
 \,\cos(2\sqrt{c}\sin t)
 + \frac{W(t)}{2\sqrt{c} \sin t} \sin(2\sqrt{c}\sin t)
 \right),  \\[0.2cm]
 & =  \left(\frac{W(t)}{2\sqrt{c}\sin t}\right)(0) 
 \left.\frac{d}{dt}\right|_{t=0} \sin(2\sqrt{c}\sin t)
 \\[0.2cm]
 & = 
 \begin{pmatrix}
  4 (x^{1}x^{4} + x^{2}x^{3}) \\[0.1cm]
  2 ((x^{1})^{2} - (x^{2})^{2} + (x^{3})^{2}
  - (x^{4})^{2}) \\[0.1cm]
  - 4 (x^{1} x^{2} - x^{3}x^{4})
 \end{pmatrix}
 = 2 A_{x}e_{2}A_{x}^{-1}
 \\[0.1cm]
 &  = - 2 f,
\end{align*}
which shows that $\tilde{e}_{3} = -2 f^v$.
\end{proof}

\subsection{Generalized Cheeger-Gromoll metrics}\label{sect:3.3}
For the tangent bundle $TM$ of a Riemannian manifold $(M, g)$, 
a {\em natural} Riemannian metric on $TM$, in the sense that 
with respect to which the vertical and horizontal subspaces of 
each tangent space of $TM$ are orthogonal and the canonical 
projection $\pi : TM \to M$ becomes a Riemannian submersion, 
was first defined by Sasaki \cite{Sasaki}.
This metric, now called the Sasaki metric, appears as having the  
simplest possible form, but its geometry is known to be rather 
rigid (cf.\ \cite{BLW1, MT}).
Later on, a more general metric, called the Cheeger-Gromoll 
metric, was given  on $TM$ by Musso and Tricerri \cite{MT},
which has been further generalized in \cite{BLW1} toward 
the discovery of new harmonic sections of Riemannian 
vector bundles.

To be precise, given the two-sphere $\stc$, 
for $m\in \rn$ and $r \geq 0$, 
the {\em generalized Cheeger-Gromoll metric} $h_{m,r}$ on 
the tangent bundle $T\stc$ is defined, 
on each tangent space $T_{(x,e)}(T \stc)$ at 
$(x, e) \in T\stc$, by
\begin{equation}\label{eqn:3.8}
\begin{aligned}
 h_{m,r} (X^{h} , Y^{h} ) & = \langle X , Y \rangle,
 \quad  h_{m,r} (X^{h} , Y^{v} ) = 0,  \\
 h_{m,r} (X^{v} , Y^{v} ) & = \omega^{m} 
 (\langle X , Y \rangle + r \langle X , e \rangle\langle  Y, 
 e \rangle),
\end{aligned}
\end{equation}
where $X, Y \in T_{x}\sn^{2}(c)$
and $\omega = 1/(1+ |e|^{2})$.
In particular, when $(x, e) \in T^{1}\stc$, this metric restricts 
on $T_{(x, e)}(T^{1}\stc)$ to
\begin{equation}\label{eqn:3.9}
\begin{aligned}
 h_{m,r} (X^{h}, Y^{h}) & = \langle X , Y \rangle,
 \quad  h_{m,r} (X^{h}, Y^{v}) = 0,  \\
 h_{m,r} (X^{v}, Y^{v}) & = \frac{1}{2^{m}} 
 \langle X , Y \rangle,
\end{aligned}
\end{equation}
since $\langle Y, e \rangle = 0$ by virtue of \eqref{eqn:3.4}.
Namely, the parameter $r$ disappears if $h_{m,r}$ is
restricted to the unit tangent bundle $T^{1}\stc$.
It should be noted that the original Cheeger-Gromoll metric
corresponds to $m = r = 1$ and the Sasaki metric to
$m = r = 0$.

Now, our Theorem~\ref{thm:1} can be proved as follows.
If we choose $m = \log_{2}c$, then, noting \eqref{eqn:3.5}
and \eqref{eqn:3.6},
we obtain from \eqref{eqn:3.9} that
\begin{align*}
 & h_{m,r}((\sqrt{c}/2)\tilde{e}_{1}, (\sqrt{c}/2)\tilde{e}_{1}) 
 = h_{m,r}(f^{h} , f^{h} ) = \langle f, f \rangle =1,   \\[0.1cm]
 & h_{m,r}((\sqrt{c}/2)\tilde{e}_{2}, (\sqrt{c}/2)\tilde{e}_{2}) 
 = h_{m,r}(e^{h} , e^{h} ) = \langle e, e \rangle = 1,  \\[0.1cm]
 & h_{m,r}((\sqrt{c}/2)\tilde{e}_{1}, (\sqrt{c}/2)\tilde{e}_{2}) 
 = h_{m,r}(f^{h}, e^{h}) = \langle f, e \rangle= 0,   \\[0.1cm]
 & h_{m,r}((\sqrt{c}/2)\tilde{e}_{2}, (\sqrt{c}/2)\tilde{e}_{3}) 
 = - h_{m,r}(e^{h}, \sqrt{c}f^{v}) = 0,  \\[0.1cm]
 & h_{m,r}((\sqrt{c}/2)\tilde{e}_{1}, (\sqrt{c}/2)\tilde{e}_{3}) 
 = - h_{m,r}(f^{h}, \sqrt{c}f^{v}) = 0,
\end{align*}
and
\begin{equation*}
 h_{m,r}((\sqrt{c}/2)\tilde{e}_{3}, (\sqrt{c}/2)\tilde{e}_{3}) 
 = h_{m,r}( -\sqrt{c}f^{v}, -\sqrt{c}f^{v}) 
 = \frac{c}{2^{m}}\langle f , f \rangle = 1 .
\end{equation*}
This shows that $F : \sn^{3}(c/4) \to T^{1} \stc$ defined by 
\eqref{eqn:2.8} induces an isometry from 
$(\rn\mathrm{P}^{3}(c/4), \,g_{\mathrm{can}})$
to $(T^{1} \stc, \,h_{m,r})$ for $m = \log_{2}c$ and any 
$r \geq 0$. 

Moreover, if we equip the unit three-sphere $\sn^{3}$ with 
a {\em Berger metric} $g_{\epsilon}$ in \cite{Berger} such 
that 
\begin{equation}\label{eqn:3.10}
 \mbox{$\{ X_{1}, X_{2}, \epsilon X_{3} \}$ is an orthonormal 
 frame of $T\sn^{3}$},  
\end{equation}
then we see from \eqref{eqn:3.5} that 
$dF_{x} (\epsilon X_{3} ) = \epsilon \tilde{e}_{3}$ and
\begin{equation*}
 h_{m,r} (\epsilon \tilde{e}_{3}, \epsilon \tilde{e}_{3}) = 
 h_{m,r}(-2\epsilon f^{v}, -2\epsilon f^{v}) = 
 \frac{1}{2^m} \langle 2\epsilon f, 2\epsilon f \rangle = 
 \frac{4\epsilon^{2} }{2^{m}}.
\end{equation*}
Therefore, for $m = \log_{2} \epsilon^{2} + 2$, the map
$F : \sn^{3} \to T^{1} \sn^{2}(4)$ yields an isometry from 
$(\rn\mathrm{P}^{3}, g_{\epsilon})$
to $(T^{1} \sn^{2}(4), h_{m,r})$ for any $r \geq 0$. 

\begin{remark}
In Theorem~\ref{thm:1}~(1), if we choose $c = 1$, then $m = 0$.
Thus, for $r = 0$ the generalized Cheeger-Gromoll metric
$h_{0,0}$ defined by \eqref{eqn:3.8} is nothing but the Sasaki 
metric defined on $T\sn^{2}(1)$.  
In this case, Theorem~\ref{thm:1}~(1) is proved in \cite{KS}.
\end{remark}

\section{Curvature approach}\label{sect:4}
To show that $(T^{1}\stc, \,h_{m,r})$ with $m = \log_{2}c$ 
is isometric to $(\rn\mathrm{P}^3(c/4), \,g_{\mathrm{can}})$,
an alternative method  is to compute that both of them have 
constant sectional curvatures $c/4$. 
To carry out this, we regard $T^{1}\stc$ as a hypersurface of 
$T\stc$ and combine the Gauss formula with our previous 
knowledge of the curvature of $(T\stc, \,h_{m,r})$. 

To this end, let $\nabla$ and $R$ denote, respectively, 
the covariant differentiation and the curvature tensor defined on 
$\sn^{2}(c)$, and let $\tilde{\nabla}$ denote the covariant 
differentiation defined on $T\sn^{2}(c)$.
Then standard computation (cf.~\cite{BLW2}) 
shows that the Levi-Civita connection on $(T\stc , \,h_{m,r})$ is 
given by
\begin{align*}
 \tilde{\nabla}_{X^h} Y^h  & = 
  (\nabla_{X} Y)^h -\frac{1}{2} (R(X, Y)e)^v, \\
 \tilde{\nabla}_{X^h} Y^v  & =
  (\nabla_{X} Y)^v +\frac{1}{2} \omega^{m} (R(e, Y)X)^h, \\
 \tilde{\nabla}_{X^v} Y^h  & =
  \frac{1}{2} \omega^{m} (R(e, X)Y)^h, \\
 \tilde{\nabla}_{X^v} Y^v  & = 
  -m \omega [\langle X, e \rangle Y+ \langle Y, e \rangle X]^v 
   + (m \omega+r)\omega_{r} \langle X, Y \rangle U  \\
 & \quad  + mr\omega\omega_{r} \langle X,
  e\rangle \langle Y, e\rangle U 
\end{align*}
for all $X\in T_{x}\stc$, $Y \in C^{\infty}(T\stc)$ and 
$(x, e) \in T\stc$, where $U \in C^{\infty}(T\stc)$ is 
the canonical vertical vector field on $T\stc$ defined by 
$U(x, e) = e^{v}$ and $\omega_{r} = 1/(1+ r |e|^{2})$. 

It should be noted that $\tilde{\nabla}_{X^{v}} Y^{v}$ has
no horizontal part, so the fibres of $T \stc$ are totally geodesic.
The unit normal $\boldsymbol{n}$ to $T^1 \stc$ in $T \stc$ 
at $(x, e)$ is proportional to the canonical vertical vector 
$U(x, e)$, that is, to the vertical lift $e^{v}$ of $e$, and the
normalization factor is $\alpha = \sqrt{{2^{m}}/{(1+r)}}$.

Let $B$ be the second fundamental form of $T^{1} \stc$ in 
$T \stc$.
For $X^{h}$ and $Y^{h}$ in $T_{(x,e)}(T^{1} \stc)$,
\begin{align*}
 B(X^{h}, Y^{h}) & = h_{m,r} \big(\tilde{\nabla}_{X^h} Y^{h} , 
  \,\boldsymbol{n}\big)\boldsymbol{n} 
   = h_{m,r} \left(-\frac{1}{2} (R(X, Y)e)^{v}, 
    \,\alpha e^{v} \right)\boldsymbol{n} \\
 & = -\frac{\alpha}{2^{m+1}}\left[\langle R(X, Y)e, e \rangle
  + r \langle R(X, Y)e, e \rangle\langle e, e \rangle\right] 
   \boldsymbol{n} = 0 .
\end{align*}
For $X^{h}$ and $Y^{v}$ in $T_{(x,e)}(T^{1} \stc)$,
\begin{align*}
 B(X^{h}, Y^{v}) & = h_{m,r} \big(\tilde{\nabla}_{X^{h}} Y^{v}, 
  \,\boldsymbol{n}\big)\boldsymbol{n} 
   = h_{m,r} \left((\nabla_{X} Y)^{v}, \boldsymbol{n}\right)
    \boldsymbol{n} \\
 & = \frac{\alpha}{2^{m}}(1+r) \langle \nabla_{X} Y, 
  e \rangle \boldsymbol{n} = 0,
\end{align*}
since we can extend $Y$ by parallel transport along $X$.
For $X^{v}$ and $Y^{h}$ in $T_{(x,e)}(T^{1} \stc)$,
\begin{equation*}
 B(X^{v}, Y^{h}) = h_{m,r} \big(\tilde{\nabla}_{X^{v}} Y^{h}, 
 \,\boldsymbol{n}\big)\boldsymbol{n} = 0.
\end{equation*}
For $X^{v}$ and $Y^{v}$ in $T_{(x,e)}(T^{1} \stc)$,
\begin{align*}
 B(X^{v}, Y^{v})  & = h_{m,r} \big(\tilde{\nabla}_{X^{v}} Y^{v}, 
  \,\boldsymbol{n}\big)\boldsymbol{n} 
   = h_{m,r} \left(\frac{m/2 +r}{1+r} \langle X , Y \rangle 
   e^{v} , \,\alpha e^{v} \right) \boldsymbol{n} \\
 &  = \alpha \frac{m/2 +r}{1+r} \langle X , Y \rangle 
\boldsymbol{n}.
\end{align*}

From the Gauss formula, the sectional curvature $\hat{K}$
of $(T^{1}\stc, \,h_{m,r})$ can be determined as follows. 
Let $(x,e)\in T^{1} \stc$ and $f\in T_{x}^{1} \stc$ 
such that $\langle e, f\rangle = 0$.
Recall that 
$T_{(x,e)}(T^{1} \stc) = \spa\big\{ e^{h} , f^{h} ,  f^{v}\big\}$.
Then, applying the formulae in \cite[Prop.\ 3.1]{BLW2}, 
we obtain
\begin{proposition}\label{prop:3}
Sectional curvatures of \,$(T^{1}\stc, \,h_{m,r})$ are
given by
\begin{equation}\label{eqn:4.1}
 \begin{aligned}
  \hat{K} (e^{h} \wedge f^{h} )  & = 
   c - \frac{3c^{2}}{2^{m+2}},  \\
  \hat{K} (e^{h} \wedge f^{v} )  & =
   \hat{K} (f^{h} \wedge f^{v} ) = \frac{c^{2}}{2^{m+2}}.
 \end{aligned}
\end{equation}
\end{proposition}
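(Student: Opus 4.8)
The plan is to apply the Gauss equation to the hypersurface $T^{1}\stc \subset (T\stc, h_{m,r})$, using the three pieces of information already assembled above: the explicit Levi-Civita connection of $(T\stc, h_{m,r})$, the second fundamental form $B$ computed just before the statement, and the curvature of $(T\stc, h_{m,r})$ from \cite[Prop.\ 3.1]{BLW2}. Concretely, for a $h_{m,r}$-orthonormal pair $V, W \in T_{(x,e)}(T^{1}\stc)$ the Gauss equation reads
\begin{equation*}
 \hat{K}(V\wedge W) = \bar{K}(V\wedge W)
 + h_{m,r}\big(B(V,V), B(W,W)\big) - \big|B(V,W)\big|^{2},
\end{equation*}
where $\bar{K}$ denotes the sectional curvature of the ambient $(T\stc, h_{m,r})$. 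I would run this for each of the three coordinate $2$-planes spanned by pairs from the $h_{m,r}$-orthonormal basis $\{e^{h}, f^{h}, f^{v}\}$ of $T_{(\tilde{x},e)}(T^{1}\stc)$.

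First I would record, for each of the three planes, the ambient sectional curvature $\bar{K}$ from \cite[Prop.\ 3.1]{BLW2}, specialized to the case of a base of constant curvature $c$ and a unit vector $e$ (so $|e|^{2}=1$, $\omega = 1/2$, and $R(X,Y)Z = c(\langle Y,Z\rangle X - \langle X,Z\rangle Y)$). This makes every curvature expression in \cite{BLW2} collapse to an explicit rational function of $c$ and $2^{m}$; for instance the mixed plane $e^{h}\wedge f^{v}$ should give something like $\bar{K} = (3/4)c^{2}/2^{m}$ up to the correction terms, and the horizontal plane $e^{h}\wedge f^{h}$ should involve the familiar $O'Neill-type$ term $c - 3c^{2}/2^{m+2}$ already in its final form because $B$ vanishes on it. Second, I would plug in the values of $B$ just computed: $B(X^{h}, \cdot) = 0$ for all arguments, and $B(f^{v}, f^{v}) = \alpha\frac{m/2+r}{1+r}\langle f,f\rangle\,\boldsymbol{n}$, with the unit normal $\boldsymbol{n}$ of squared length $1$ and the normalization $\alpha = \sqrt{2^{m}/(1+r)}$. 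Since $f^{v}$ is not yet unit-length in $h_{m,r}$ — by \eqref{eqn:3.9} it has squared norm $2^{-m}$ — I must either rescale to the unit vector $\hat{f}^{v} = 2^{m/2} f^{v}$ before applying Gauss, or carry the normalization factors through; rescaling is cleaner.

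The key simplification is that $B$ is \emph{identically zero} on horizontal lifts, so for the two planes $e^{h}\wedge \hat{f}^{v}$ and $f^{h}\wedge \hat{f}^{v}$ the extrinsic correction $h_{m,r}(B(V,V),B(W,W)) - |B(V,W)|^{2}$ reduces to $h_{m,r}(B(\hat{f}^{v},\hat{f}^{v}), 0) - |B(\cdot,\hat{f}^{v})|^{2}$, and since $B(e^{h},\hat{f}^{v}) = B(f^{h},\hat{f}^{v}) = 0$ as well, these two mixed planes are \emph{intrinsically flat relative to the ambient}, i.e.\ $\hat{K} = \bar{K}$ there. For the horizontal plane $e^{h}\wedge f^{h}$ the correction vanishes entirely, so again $\hat{K} = \bar{K}$. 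Thus the whole proof reduces to reading off $\bar{K}$ for these three planes from \cite{BLW2} and simplifying at $|e|=1$, $m = \log_{2}c$; the claimed formulae \eqref{eqn:4.1} should fall out directly. The main obstacle I anticipate is purely bookkeeping: matching the sign and normalization conventions of \cite[Prop.\ 3.1]{BLW2} (in particular the conventions for $R$ and for $\omega$, $\omega_{r}$) to those fixed in Subsection~\ref{sect:3.3}, and correctly tracking the non-unit norm of $f^{v}$ so that the three sectional curvatures come out with the stated common denominator $2^{m+2}$ rather than a spurious factor of $2^{m}$ or $2^{m+1}$. Once the conventions are pinned down, no genuinely hard estimate remains — it is a finite, explicit computation on a three-dimensional tangent space.
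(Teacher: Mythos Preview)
Your approach is exactly the paper's: Gauss formula plus the vanishing of all extrinsic correction terms (since $B$ kills any horizontal argument) reduces everything to the ambient sectional curvatures, which are then read off from \cite{BLW2}. Two small points to fix. First, the proposition is stated for arbitrary $m$, so do not specialize to $m=\log_{2}c$ in the computation; the formulae \eqref{eqn:4.1} hold with $m$ a free parameter. Second --- and this is the one substantive wrinkle the paper singles out --- the plane $f^{h}\wedge f^{v}$ is \emph{not} covered by \cite[Prop.\ 3.1]{BLW2}, because those formulae assume the horizontal and vertical lifts come from distinct vectors. For that plane you must go back to the curvature tensor formula \cite[Prop.\ 2.3]{BLW2} and compute
\[
 \tilde{K}(f^{h}\wedge f^{v})
 = \frac{h_{m,r}(\tilde{R}(f^{h},f^{v})f^{v},f^{h})}{|f^{h}\wedge f^{v}|^{2}}
 = \frac{|R(e,f)f|^{2}}{2^{m+2}}
 = \frac{c^{2}}{2^{m+2}}
\]
directly. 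With that in hand, your plan goes through verbatim.
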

\begin{proof}
Denoting by $\tilde{K}$ the sectional curvature of 
$(T\stc, \,h_{m,r})$, we have
\begin{align*}
 \hat{K} (e^{h} \wedge f^{h} )  & = 
  \tilde{K} (e^{h} \wedge f^{h} )  \\
 &  \quad  + h_{m,r}\big(B(e^{h}, e^{h}), B(f^{h}, f^{h})\big)
  - \abs{B(e^{h}, f^{h})}^2 \\
 & = \tilde{K} (e^{h} \wedge f^{h} ) = 
  c - \frac{3c^{2}}{2^{m+2}},  \\
 \hat{K} (e^{h} \wedge f^{v} )  & = 
  \tilde{K} (e^{h} \wedge f^{v} )  \\
 & \quad  + h_{m,r}\big(B(e^{h}, e^{h}), 
 B(\beta f^{v}, \beta f^{v})\big)
  - \abs{B(e^{h}, \beta f^{v})}^{2} \\
 & = \tilde{K} (e^{h} \wedge f^{v} ) = 
   \frac{c^{2}}{2^{m+2}} , 
\end{align*}
where $\beta f^v$ is of unit length, and
\begin{align*}
 \hat{K} (f^{h} \wedge f^{v})  & = 
  \tilde{K} (f^{h} \wedge f^{v} )  \\
 & \quad  + h_{m,r}\big(B(f^{h}, f^{h}),
 B(\beta f^{v}, \beta f^{v})\big)
  - \abs{B(f^{h},\beta f^{v})}^{2} \\
 & =  \tilde{K} (f^{h} \wedge f^{v} ).
\end{align*}
Note that $\tilde{K} (f^{h} \wedge f^{v})$ cannot be 
computed from the formulae in \cite[Prop.\ 3.1]{BLW2}, 
since $f^{h}$ and $f^{v}$ are the horizontal and vertical lifts 
of the same vector. 
So we compute it as
\begin{equation*}
 \tilde{K} (f^{h} \wedge f^{v})
  = \frac{h_{m,r}(\tilde{R}(f^{h} , f^{v}) f^{v}, f^{h})}
   {|f^{h} \wedge f^{v} |^{2}} = \frac{|R(e,f)f|^{2}}{2^{m+2}} 
  = \frac{c^{2}}{2^{m+2}},
\end{equation*}
$\tilde{R}$ being the curvature tensor on $T\sn^{2}(c)$
(cf.~\cite[Prop.\ 2.3]{BLW2}).
\end{proof}
\begin{remark}
The parameter $r$ in the generalized Cheeger-Gromoll metric 
$h_{m,r}$ has no influence on the sectional curvature of 
$(T^{1}\sn^{2}(c), h_{m,r})$, since it disappears on the 
unit tangent bundle as in \eqref{eqn:3.9}.
On the other hand, it has an effect on the sectional curvature 
of the ambient space $T\sn^{2}(c)$.
For instance, the following are proved in \cite{BLW2}:
\vspace{0.1cm}
\begin{itemize}
 \item[(1)]  $(T\sn^{2}(c), \,h_{m,r})$ has positive sectional 
 curvature if and only if
 \vspace{0.1cm}
  \begin{itemize}
   \item[(i)]  $r = 0, \ m \geq 1, \ \dfrac{4}{3}
    \dfrac{m^{m}}{(m-1)^{m-1}} \geq c > 0$, or
     \vspace{0.1cm}
   \item[(ii)]  $r > 0, \ m = 1, \ 4/3 \geq c > 0$.
  \end{itemize}
   \vspace{0.2cm}
 \item[(2)]  $(T\sn^{2}(c), \,h_{m,r})$ has positive scalar
  curvature if
   \vspace{0.1cm}
   \begin{itemize}
    \item[(i)]  $r = 0, \ 2 \geq m \geq 1, 
     \ 4 \dfrac{m^{m}}{(m-1)^{m-1}} \geq c > 0$, or
      \vspace{0.1cm}
   \item[(ii)]  $r > 0, \ m = 1, \ 4 \geq c > 0$.
  \end{itemize}
\end{itemize}
\end{remark}

An alternative proof of Theorem~\ref{thm:1} 
now goes as follows.
The two values for the sectional curvatures in \eqref{eqn:4.1}
are equal to $c/4$ if $m = \log_{2}c$. 
Hence $(T^{1} \stc, \,h_{m,r})$ 
is isometric to 
$(\rn\mathrm{P}^3(c/4), \,g_{\mathrm{can}})$ 
for any $r \geq 0$.¡¡

Similarly, from formulae in \cite[p.\ 306]{Milnor}, we see that,
when $\sn^3$ is equipped with the Berger metric $g_\epsilon$, 
its sectional curvatures take the values $4 - 3\epsilon^{-2}$ 
and $\epsilon^{-2}$.
Therefore, if we choose
\[
 m = \log_{2}{\epsilon^2} + 2,
\]
then the map $F : \sn^3 \to T^{1}\sn^{2}(4)$ yields 
an isometry from $(\rn\mathrm{P}^3, g_{\epsilon})$
to $(T^{1}\sn^{2}(4), h_{m,r})$ for any $r \geq 0$.

\section{Hyperbolic counterpart}\label{sect:5}
In what follows, we denote by $\mathbb{R}^{n}_{\nu}$
the pseudo-Euclidean $n$-space of index $\nu$,
that is, $\mathbb{R}^{n}$ equipped with the indefinite metric
\[
 \langle x, y \rangle = \sum_{i=1}^{n-\nu}
 x^{i}y^{i} - \sum_{j=n-\nu +1}^{n}
 x^{j}y^{j}.
\]

\subsection{Hyperbolic Hopf map}\label{sect:5.1}
Let $H^{3}_{1}(c)$ be the {\em anti-de Sitter $3$-space}
of constant negative curvature $-c < 0$ (cf.\ \cite{O'Neill}),
which is, by definition, a hypersurface in $\mathbb{R}^{4}_{2}$
defined by $\langle x, x \rangle = -1/c$, that is,
\begin{equation*}
 H^{3}_{1}(c) = \left\{ (x^{1}, x^{2}, x^{3}, x^{4}) \in 
 \mathbb{R}^{4}_{2}  \mid  (x^{1})^{2} + (x^{2})^{2}
 - (x^{3})^{2} - (x^{4})^{2} = -1/c \right\}.
\end{equation*}
Note that $H^{3}_{1}(c)$ is diffeomorphic to 
$\mathbb{S}^{1} \times \mathbb{R}^{2}$.
If we introduce complex coordinates
$z_{1} = x^{1} + \sqrt{-1}x^{2}$ and
$z_{2} = x^{3} + \sqrt{-1}x^{4}$,
then $H^{3}_{1}(c)$ is represented as
\begin{equation*}
 H^{3}_{1}(c) = \left\{ (z_{1}, z_{2}) \in 
 \mathbb{C}^{2} \mid  |z_{1}|^{2} - |z_{2}|^{2} =
 -1/c \right\}.
\end{equation*}

To define the hyperbolic Hopf map, let
$\varpi : \mathbb{C}^{2} \setminus \{0\} \to 
\mathbb{C}\mathrm{P}^{1}$ be the canonical projection
defining the complex projective line $\mathbb{C}\mathrm{P}^{1}$.
Restricting $\varpi$ to $H^{3}_{1}(c) \subset \mathbb{C}^{2}
\setminus \{0\}$, we have a mapping 
\begin{equation*}
 \varpi : H^{3}_{1}(c) \to \mathbb{C},  \quad
 z = (z_{1}, z_{2})  \mapsto  \varpi(z) = z_{1}/z_{2},
\end{equation*}
which maps $H^{3}_{1}(c)$ diffeomorphically onto the unit ball
$B^{2} = \left\{ \zeta \in \mathbb{C}  \mid  |\zeta| < 1 \right\}$ 
in $\mathbb{C}$.
Let
\[
 \mathbb{H}^{2}(c) = \left\{ (x^{1}, x^{2}, x^{3}) \in
 \mathbb{R}^{3}_{1} \mid  (x^{1})^{1} + (x^{2})^{2}
 - (x^{3})^{2} = - 1/c, \,x^{3} > 0 \right\}
\]
be the hyperbolic plane of constant curvature $- c < 0$
embedded in $\mathbb{R}^{3}_{1}$.
Denote by
\begin{equation*}
 p^{-1}(\zeta) = \left( \frac{2\operatorname{Re}\zeta}
 {1 - |\zeta|^{2}}, \frac{2\operatorname{Im}\zeta}
 {1 - |\zeta|^{2}}, \frac{1 + |\zeta|^{2}}{1 - |\zeta|^{2}}\right),
 \phantom{- -}  \quad  \zeta \in B^{2} \subset 
 \mathbb{C},
\end{equation*}
the inverse stereographic projection 
$p^{-1} : B \to \mathbb{H}^{2}(1)$ for $\mathbb{H}^{2}(1)$
from the south pole $(0,0,-1) \in \mathbb{H}^{2}(1)$,
and let $\iota$ be the homothety defined by
\[
 \eta : \mathbb{H}^{2}(1) \to \mathbb{H}^{2}(c),
 \quad  x \mapsto x/\sqrt{c}.
\]
Then, composing $\varpi$ with $\eta \circ p^{-1}$, 
we obtain the {\em hyperbolic Hopf map}
\begin{equation}\label{eqn:5.1}
 H = \eta \circ p^{-1} \circ \varpi : H^{3}_{1}(c/4) \to 
 \mathbb{H}^{2}(c),
\end{equation}
given by
\begin{equation}\label{eqn:5.2}
 H(z) =  (1/\sqrt{c})
 \left(2z_{1}\bar{z}_{2}, |z_{1}|^{2} + |z_{2}|^{2}\right)
 \in \c\times\rn.
\end{equation}

Note that the hyperbolic Hopf map $H$ is a submersion from 
a pseudo-Riemannian manifold $H^{3}_{1}(c/4)$ with 
geodesic fibres, which can be described as the orbits of the 
$\sn^{1}$-action 
$\sn^{1} \times H^{3}_{1}(c/4) \to H^{3}_{1}(c/4)$
on $H^{3}_{1}(c/4)$ defined by
\begin{equation*}
 \big(e^{\i t} , (z_{1}, z_{2})\big) \mapsto 
 e^{\i t} (z_{1}, z_{2}) = 
 \big(e^{\i t}z_{1} , e^{\i t}z_{2} \big).
\end{equation*}
In particular, 
if $x = (x^{1}, x^{2}, x^{3}, x^{4}) \in H^{3}_{1}(1)$,
then
\[
 X_{3}(x) = (\i z_{1} , \i z_{2}) = (- x^{2}, x^{1}, - x^{4}, x^{3})
\]
is a vector tangent to a fibre of the hyperbolic Hopf map
$H : H^{3}_{1}(1) \to \mathbb{H}^{2}(4)$, 
with $\langle X_{3}, X_{3} \rangle = -1$, and
\begin{equation*}
 X_{3}(x),  \quad  
 X_{2}(x) = (x^{3}, - x^{4}, x^{1}, - x^{2}),  \quad
 X_{1}(x) = (x^{4}, x^{3}, x^{2}, x^{1})
\end{equation*}
form a global pseudo-orthonormal frame of $TH^{3}_{1}$
such that $\langle X_{2}, X_{2} \rangle =
\langle X_{1}, X_{1} \rangle =1$ and 
$\langle X_{1}, X_{2} \rangle = 
\langle X_{1}, X_{3} \rangle =
\langle X_{2}, X_{3} \rangle = 0$.

Now, recall that the Lie group 
\begin{align*}
 \mathrm{SU}(1, 1) & = \left\{
 A \in \mathrm{GL}(2, \mathbb{C}) \mid
 {}^{t}AI_{1}\bar{A} = I_{1}, \ \det A = 1 \right\}  \\[0.1cm]
 & = \left\{ \begin{pmatrix}
  a  &  \bar{b}  \\
  b  &  \bar{a}
  \end{pmatrix} \biggm|
  a, b \in \mathbb{C}, \ |a|^{2} - |b|^{2} = 1 \right\},
\end{align*}
where $I_{1} = \displaystyle\begin{pmatrix}
 1  &  0  \\
 0  &  -1  
\end{pmatrix}$,
has the Lie algebra
\begin{align*}
 \mathfrak{su}(1, 1) & = \left\{
 X \in \mathfrak{gl}(2, \mathbb{C})  \mid
 {}^{t}XI _{1} + I_{1}\bar{X} = 0, \ \operatorname{Tr} X = 0
 \right\}  \\
 & = \left\{ \begin{pmatrix}
 \sqrt{-1}x^{3}  &  x^{2} - \sqrt{-1}x^{1}  \\[0.1cm]
 x^{2} + \sqrt{-1}x^{1}  &  - \sqrt{-1}x^{3}
 \end{pmatrix}
 \Biggm| x^{1}, x^{2}, x^{3} \in \mathbb{R}
 \right \},
\end{align*}
which is identified with $\mathbb{R}^{3}_{1}$, equipped 
with the scalar product $\langle X, Y \rangle = (1/2)
\operatorname{Tr}(XY)$, so that
\begin{equation}\label{eqn:5.3}
 e_{1} = \begin{pmatrix}
 0  &  - \sqrt{-1}\,  \\
 \sqrt{-1}  &  0
 \end{pmatrix},
 \quad  e_{2} = \begin{pmatrix}
 0  &  1  \\
 1  &  0
 \end{pmatrix},
 \quad  e_{3} = \begin{pmatrix}
 \sqrt{-1}  &  0  \\
 0  &  - \sqrt{-1}
 \end{pmatrix}
\end{equation}
form a pseudo-orthonormal basis of 
$\left(\mathfrak{su}(1, 1), \langle\ , \,\rangle\right)$.

Note that the anti-de Sitter $3$-space 
$H^{3}_{1}(1)$ 
is identified with $\mathrm{SU}(1, 1)$ under the map
\begin{equation}\label{eqn:5.4}
\begin{aligned}
 \psi : H^{3}_{1}(1)  &  \to \mathrm{SU}(1, 1),  \\
  x = (x^{1}, x^{2}, x^{3}, x^{4})  &  \mapsto
  A_{x} = \sqrt{-1}
   \begin{pmatrix}
   \bar{z}_{2}  &  - z_{1}  \\
   \bar{z}_{1}  &  - z_{2}
   \end{pmatrix}.
\end{aligned}
\end{equation}
Moreover, the adjoint representation of $\mathrm{SU}(1, 1)$
induces a covering homomorphism
\begin{equation}\label{eqn:5.5}
 \rho :  \mathrm{SU}(1, 1) \to \mathrm{SO}^{+}(1, 2),
\end{equation}
where $\mathrm{SO}^{+}(1, 2)$ is the restricted Lorentz group
with signature $(1, 2)$, that is, the identity component of 
the group of linear isometries $\mathrm{O}(1, 2)$ of 
$\mathbb{R}^{3}_{1}$.
Indeed, $\rho(A_{x})$ is defined as
\begin{equation*}
 \rho(A_{x}) : \mathfrak{su}(1, 1) \to \mathfrak{su}(1, 1),
 \quad  Y \mapsto \operatorname{Ad}(A_{x})Y
 = A_{x}YA_{x}^{-1},
\end{equation*}
and, with respect to the pseudo-orthonormal basis 
\eqref{eqn:5.3} of $\mathfrak{su}(1, 1)$,
the matrix representation of $\rho(A_{x})$ is given by
\begin{equation}\label{eqn:5.6}
\begin{aligned}
 \rho(A_{x}) 
 & =
 \begin{pmatrix}
  - \operatorname{Re}(z^{2}_{1} + \bar{z}^{2}_{2})
  &  - \operatorname{Im}(z^{2}_{1} - \bar{z}^{2}_{2})
  &  2\operatorname{Re}(z_{1}\bar{z}_{2})  \\[0.1cm]
  - \operatorname{Im}(z^{2}_{1} + \bar{z}^{2}_{2})
  &  \operatorname{Re}(z^{2}_{1} - \bar{z}^{2}_{2})
  &  2 \operatorname{Im}(z_{1}\bar{z}_{2})  \\[0.1cm]
  -2 \operatorname{Re}(z_{1}z_{2})  
  &  - 2 \operatorname{Im}(z_{1}z_{2})
  &  |z_{1}|^{2} + |z_{2}|^{2}
 \end{pmatrix}  \\[0.1cm]
  & = 
 \begin{pmatrix}
  A_{x}e_{1}A_{x}^{-1}  &  A_{x}e_{2}A_{x}^{-1}  &
  A_{x}e_{3}A_{x}^{-1}
  \end{pmatrix},
 \end{aligned}
\end{equation}
from which we easily see that the kernel of $\rho$ is 
$\{\pm \operatorname{Id}\}$.

The unit tangent bundle 
$\pi : T^{1}\mathbb{H}^{2}(c) \to \mathbb{H}^{2}(c)$
of the hyperbolic plane $\mathbb{H}^{2}(c)$ is defined to be
\begin{equation*}
\begin{aligned}
 T^{1}\mathbb{H}^{2}(c)  & =  \left\{ (x, v) \in 
 \mathbb{R}^{3}_{1} \times \mathbb{R}^{3}_{1} 
  \mid  x \in \mathbb{H}^{2}(c), 
  \ v \in T_{x}\mathbb{H}^{2}(c), \ \abs{v} = 1\right\}  \\
 & =  \left\{ (x, v) \in \mathbb{R}^{3}_{1} \times 
  \mathbb{R}^{3}_{1} \mid  \langle x, x \rangle = -1/c,
  \ \langle v, v \rangle = 1, \ \langle x, v \rangle = 0
  \right\}
\end{aligned}
\end{equation*}
with the canonical projection $\pi(x, v) = x$.
As in the spherical case in \S\ref{sect:2},
we may identify $T^{1}\mathbb{H}^{2}(c)$ with
$\mathrm{SO}^{+}(1, 2)$ by the diffeomorphism
\begin{equation}\label{eqn:5.7}
 \phi : \mathrm{SO}^{+}(1, 2) \to T^{1}\mathbb{H}^{2}(c),
 \quad  (c_{1} \ c_{2} \ c_{3}) \mapsto
 (c_{3}/\sqrt{c}, c_{1}).
\end{equation}
Finally, let $\iota$ be the homothety defined by
\[
 \iota : H^{3}_{1}(c/4) \to H^{3}_{1}(1),
 \quad  2x/\sqrt{c} \mapsto x.
\]
Then, it is immediate from \eqref{eqn:5.1} through \eqref{eqn:5.7} 
that the composition of the covering map
\begin{equation}\label{eqn:5.8}
 F = \phi \circ \rho \circ \psi \circ \iota : H^{3}_{1}(c/4)
 \to T^{1}\mathbb{H}^{2}(c)
\end{equation}
with the canonical projection 
$\pi : T^{1}\mathbb{H}^{2}(c) \to \mathbb{H}^{2}(c)$
yields the hyperbolic Hopf map 
$H : H^{3}_{1}(c/4) \to \mathbb{H}^{2}(c)$ of \eqref{eqn:5.1}.
Indeed, for each $2x/\sqrt{c} \in H^{3}_{1}(c/4)$
we have
\begin{equation}\label{eqn:5.9}
 F(2x/\sqrt{c}) = (\tilde{x}, e) \in T^{1}\mathbb{H}^{2}(c),
\end{equation}
where $\tilde{x} = (1/\sqrt{c})A_{x}e_{3}A_{x}^{-1}$
and $e = A_{x}e_{1}A_{x}^{-1}$, so that 
\begin{equation*}
 \pi \circ F(2x/\sqrt{c}) = (1/\sqrt{c})\left( 2z_{1}\bar{z}_{2},
 |z_{1}|^{2} + |z_{2}|^{2} \right) = H(z).
\end{equation*}

\subsection{Differentials of maps}\label{sect:5.2}
The differentials of maps appeared in \eqref{eqn:5.8} can be 
computed in the same way as in \S \ref{sect:3.1},
so we only remark on the following.

1)  Given $x \in H^{3}_{1}(1)$, the differential of $\psi$
in \eqref{eqn:5.4}
\[
 d\psi_{x} : T_{x}H^{3}_{1}(1) \to 
 T_{\psi(x)}(\mathrm{SU}(1, 1)) = 
 A_{x}\cdot \mathfrak{su}(1, 1)
\]
is given by
\begin{equation}\label{eqn:5.10}
 d\psi_{x}(X_{3}(x)) = A_{x}e_{3},  \quad
 d\psi_{x}(X_{2}(x)) = A_{x}e_{2},  \quad
  d\psi_{x}(X_{1}(x)) = A_{x}e_{1}.
\end{equation}

2)  The differential of $\rho$ in \eqref{eqn:5.5}
\[
 d\rho_{A_{x}} : T_{A_{x}}(\mathrm{SU}(1, 1)) = 
 A_{x} \cdot \mathfrak{su}(1, 1) \to T_{\rho(A_{x})}
 \mathrm{SO}^{+}(1, 2) = \rho(A_{x}) \cdot
 \mathfrak{so}(1, 2)
\]
is a linear map sending
\[
 A_{x}Y \mapsto d\rho_{A_{x}}(A_{x}Y) =
 \rho(A_{x}) \circ \operatorname{ad}(Y),
\]
so that we have
\begin{equation}\label{eqn:5.11}
\begin{aligned}
 d\rho_{A_{x}}(A_{x}e_{3}) & =
 \begin{pmatrix} 
  2A_{x}e_{2}A_{x}^{-1}  &  -2A_{x}e_{1}A_{x}^{-1}
   &  0
   \end{pmatrix},  \\
 d\rho_{A_{x}}(A_{x}e_{2}) & =
  \begin{pmatrix} 
   2A_{x}e_{3}A_{x}^{-1}  &  0
   &  2A_{x}e_{1}A_{x}^{-1}
   \end{pmatrix},  \\
 d\rho_{A_{x}}(A_{x}e_{1}) & =
  \begin{pmatrix} 
  0  &  -2A_{x}e_{3}A_{x}^{-1}
  & -2A_{x}e_{2}A_{x}^{-1}
  \end{pmatrix},
\end{aligned}
\end{equation}
since $\operatorname{ad}(e_{1})(e_{1}) = 0,
\ \operatorname{ad}(e_{1})(e_{2}) = -2e_{3},
\ \operatorname{ad}(e_{1})(e_{3}) = -2e_{2},
\ \operatorname{ad}(e_{2})(e_{3}) = 2e_{1}$
for the pseudo-orthonormal basis \eqref{eqn:5.3} of
$\mathfrak{su}(1, 1)$.

3)  Combining \eqref{eqn:5.10} with \eqref{eqn:5.11}
and taking into account the differentials of the diffeomorphism
$\phi$ and the homothety $\iota$, we find that the
differential of $F$ in \eqref{eqn:5.8}
\[
 dF_{x} : T_{x}H^{3}_{1}(c/4) \to
 T_{F(x)}(T^{1}\mathbb{H}^{2}(c))
\]
is determined as
\begin{equation}\label{eqn:5.12}
\begin{aligned}
 dF_{x}(2X_{3}(x)/\sqrt{c})  & = 
 (0, \ 2 A_{x}e_{2}A_{x}^{-1}) = \tilde{e}_{3},  \\
 dF_{x}(2X_{2}(x)/\sqrt{c})  & = 
 (2A_{x}e_{1}A_{x}^{-1}/\sqrt{c}, 
 \ 2 A_{x}e_{3}A_{x}^{-1}) = \tilde{e}_{2},  \\
 dF_{x}(2X_{1}(x)/\sqrt{c})  & = 
 (-2 A_{x}e_{2}A_{x}^{-1}/\sqrt{c}, \ 0) = \tilde{e}_{1}
\end{aligned}
\end{equation}
for each $x \in H^{3}_{1}(c/4)$.

\subsection{Lifts to the unit tangent bundle}
Recall that the unit tangent bundle $T^{1}\mathbb{H}^{2}(c)$
is a $3$-dimensional hypersurface of $T\mathbb{H}^{2}(c)$.
As in the spherical case in \S \ref{sect:3.2},
denoting by $X^{h}$ (resp.\ $Y^{v}$) the horizontal 
(resp.\ vertical) lift of $X$ (resp.\ $Y$),
we see that at $(x, e) \in T^{1}\mathbb{H}^{2}(c)$
the tangent space of the tangent bundle $T\mathbb{H}^{2}(c)$
is written as
\[
 T_{(x,e)}(T\hn^{2}(c)) = \left\{ X^{h} + Y^{v} \mid 
 X, Y \in T_{x}\hn^{2}(c) \right\},
\]
whereas that of the unit tangent bundle $T^{1}\mathbb{H}^{2}(c)$
is given by
\[
 T_{(x,e)}(T^{1}\hn^{2}(c)) = \left\{ X^{h} + Y^{v}
 \mid  X, Y \in T_{x}\hn^{2}(c), \ \langle Y, e \rangle = 0 \right\}.
\]

Recalling \eqref{eqn:5.9}, we set
\[
 e = A_{x}e_{1}A_{x}^{-1},  \quad
 f = - A_{x}e_{2}A_{x}^{-1},
\]
and $\tilde{x} = (1/\sqrt{c})A_{x}e_{3}A_{x}^{-1}$.
Then $(\tilde{x}, f) \in T^{1}\mathbb{H}^{2}(c)$ and
$\langle f, e \rangle = 0$, so that
\begin{equation*}
 T_{(\tilde{x}, e)}(T^{1}\mathbb{H}^{2}(c)) =
 \operatorname{Span}\left\{ e^{h}, f^{h}, f^{v} \right\}.
\end{equation*}
Furthermore, we have the following
\begin{proposition}\label{prop:4}
Let \,$\tilde{x}$, $e$ and $f$ be as above.  Then
\begin{equation}\label{eqn:5.13}
 (\sqrt{c}/2) \tilde{e}_{2}= e^{h},  \quad  
 (\sqrt{c}/2)\tilde{e}_{1}=f^{h},  \quad  
 \tilde{e}_{3} = -2f^{v}.
\end{equation}
\end{proposition}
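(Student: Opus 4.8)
The plan is to transcribe the proof of Proposition~\ref{prop:2} to the hyperbolic setting, replacing the great circles of $\stc$ by the geodesics of $\mathbb{H}^{2}(c)$, so that the trigonometric functions become hyperbolic ones; the three identities in \eqref{eqn:5.13} would be handled one at a time, and I expect only the last to carry real content.

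For $e^{h}$, since $e=A_{x}e_{1}A_{x}^{-1}$ is a \emph{spacelike} unit vector (recall $\langle e_{1},e_{1}\rangle=1$ in $\mathfrak{su}(1,1)$ and $\Ad$ is an isometry), I would take the geodesic $\gamma(t)=\cosh(\sqrt{c}\,t)\,\tilde{x}+\sinh(\sqrt{c}\,t)\,(e/\sqrt{c})$ of $\mathbb{H}^{2}(c)$, so that $\gamma(0)=\tilde{x}$ and $\dot{\gamma}(0)=e$; then $\Gamma(t)=(\gamma(t),\dot{\gamma}(t))$ is parallel to $\dot{\gamma}(t)$, whence $e^{h}=\dot{\Gamma}(0)=(\dot{\gamma}(0),\ddot{\gamma}(0))$, and $\ddot{\gamma}(0)=c\tilde{x}=\sqrt{c}\,A_{x}e_{3}A_{x}^{-1}$ gives $e^{h}=(A_{x}e_{1}A_{x}^{-1},\,\sqrt{c}\,A_{x}e_{3}A_{x}^{-1})=(\sqrt{c}/2)\tilde{e}_{2}$ by \eqref{eqn:5.12}. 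For $f^{h}$, with $f=-A_{x}e_{2}A_{x}^{-1}$, I would take $\gamma(t)=\cosh(\sqrt{c}\,t)\,\tilde{x}+\sinh(\sqrt{c}\,t)\,(f/\sqrt{c})$ together with $\Gamma(t)=(\gamma(t),v(t))$, $v(t)\equiv e$; because $e$ is orthogonal to both $\tilde{x}$ and $f$ (pseudo-orthonormality of the basis \eqref{eqn:5.3}), the constant field $v$ stays tangent to $\mathbb{H}^{2}(c)$ along $\gamma$ with vanishing ambient derivative, so $\nabla_{\dot{\gamma}(t)}v(t)=0$ and $f^{h}=\dot{\Gamma}(0)=(f,0)=(\sqrt{c}/2)\tilde{e}_{1}$ by \eqref{eqn:5.12}.

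For the remaining identity $\tilde{e}_{3}=-2f^{v}$, since $d\pi(\tilde{e}_{3})=0$ by \eqref{eqn:5.12} I would compute the connection map $K(\tilde{e}_{3})$ exactly as in Section~\ref{sect:3.2}. Writing $\tilde{e}_{3}$ as the initial velocity of $\tilde{\gamma}(t)=F\circ(2/\sqrt{c})\gamma(t)$ for the fibre geodesic $\gamma(t)=e^{\i t}x$, formulas \eqref{eqn:5.6}--\eqref{eqn:5.7} show $\tilde{\gamma}(t)=\big((1/\sqrt{c})A_{x}e_{3}A_{x}^{-1},\,A_{\gamma(t)}e_{1}A_{\gamma(t)}^{-1}\big)$, a curve lying in the fibre over $\tilde{x}$; hence the parallel transport in the definition of $K$ is trivial and $K(\tilde{e}_{3})=\left.\frac{d}{dt}\right|_{t=0}\exp_{\tilde{x}}(W(t))$ with $W(t)=A_{\gamma(t)}e_{1}A_{\gamma(t)}^{-1}-A_{x}e_{1}A_{x}^{-1}\in T_{\tilde{x}}\mathbb{H}^{2}(c)$. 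From \eqref{eqn:5.4} one reads off $A_{\gamma(t)}=A_{x}e^{-te_{3}}$, and the bracket relations of $\mathfrak{su}(1,1)$ then present $W(t)$ as an explicit combination of $A_{x}e_{1}A_{x}^{-1}$ and $A_{x}e_{2}A_{x}^{-1}$ with $\abs{W(t)}=2\abs{\sin t}$, because $T_{\tilde{x}}\mathbb{H}^{2}(c)$ is positive definite. Substituting $W(t)$ into the hyperbolic geodesic
\[
 \delta_{t}(s)=\tilde{x}\cosh\big(\sqrt{c}\,\abs{W(t)}\,s\big)+\frac{1}{\sqrt{c}}\,\frac{W(t)}{\abs{W(t)}}\,\sinh\big(\sqrt{c}\,\abs{W(t)}\,s\big)
\]
and differentiating $\delta_{t}(1)$ at $t=0$ --- only the derivative of the $\sinh$ factor contributes, since $\sinh 0=0$ --- would give $K(\tilde{e}_{3})=\dot{W}(0)$, a multiple of $f$; evaluating the constant and recalling $K(X^{v})=X$ then yields $\tilde{e}_{3}=-2f^{v}$.

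The hard part will be this last step, and within it the sign bookkeeping imposed by the indefinite metric. One must track which of $\tilde{x}$, $e$, $f$, $X_{3}$ are spacelike and which timelike, use the $\cosh/\sinh$ geodesics of $\mathbb{H}^{2}(c)$ rather than the circular ones, and --- above all --- notice that the form \eqref{eqn:5.4} of $\psi$ produces $A_{\gamma(t)}=A_{x}e^{-te_{3}}$, whereas its spherical analogue \eqref{eqn:2.1} gave $A_{x}e^{+te_{3}}$; since this reversal flips the sign of $\dot{W}(0)$, none of the signs appearing in $W(t)$ and in $K(\tilde{e}_{3})$ can be copied verbatim from the proof of Proposition~\ref{prop:2}, and each must be re-derived.
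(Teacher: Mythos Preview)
Your proposal is correct and, for the first two identities, is word-for-word the paper's argument. For the third identity the paper does something slightly more economical than what you (and Proposition~\ref{prop:2}) do: rather than pushing the Hopf-fibre curve $e^{\i t}x$ through $F$ and computing $W(t)=A_{\gamma(t)}e_{1}A_{\gamma(t)}^{-1}-e$, the paper simply writes down the circle $\gamma(t)=(\tilde{x},(\cos t)e+(\sin t)f)$ directly in the fibre of $T^{1}\hn^{2}(c)$, observes that $\dot{\gamma}(0)=(0,f)=-\tilde{e}_{3}/2$, and then evaluates $K$ along that curve. This bypasses the matrix identity $A_{\gamma(t)}=A_{x}e^{-te_{3}}$ and the $\Ad$-computation you outline, so the sign bookkeeping you flag as the hard part largely evaporates: one only needs $\theta(t)=|(\cos t-1)e+(\sin t)f|=\sqrt{2(1-\cos t)}$ and the chain rule. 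Of course the two curves are the same up to reparametrisation (your $\tilde{\gamma}(t)$ is $(\tilde{x},\cos(2t)e\pm\sin(2t)f)$), so the methods are equivalent; the paper's choice just spares one the delicate sign-chase you correctly anticipate.
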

\begin{proof}
This can be seen in the same manner as in the proof of
Proposition~\ref{prop:2}, so we only remark on the
following for the sake of completeness.

For the horizontal lift $e^{h}$, 
we consider a geodesic $\gamma : I \to \hn^{2}(c)$ 
starting from $\tilde{x} \in \hn^{2}(c)$ with initial vector 
$e \in T^{1}_{\tilde{x}}\hn^{2}(c)$. 
Then the curve $\Gamma : I \to T\hn^{2}(c)$ given by 
$\Gamma(t) = (\gamma(t), v(t) = \dot{\gamma}(t))$ 
satisfies that $\Gamma(0) = (\tilde{x}, e)$ and
$\nabla_{\dot{\gamma}(t)} v(t) = 0$ for all $t \in I$. 
Since
\[
 \gamma(t) = \cosh(\sqrt{c}t)\tilde{x} +
 \sinh(\sqrt{c}t)(e/\sqrt{c}),
\]
we deduce that
\[
 e^{h} = \dot{\Gamma}(0) = (e, c\tilde{x}) = (\sqrt{c}/2)
 \tilde{e}_{2}.
\]

Similarly, for $f^{h}$, we take a geodesic
$\gamma : I \to \hn^{2}(c)$ defined by
\[
 \gamma(t) = \cosh(\sqrt{c}t)\tilde{x} + 
 \sinh(\sqrt{c}t)(f/\sqrt{c}),
\]
starting from $\tilde{x} \in \hn^{2}(c)$ with initial vector 
$f \in T_{\tilde{x}}^{1}\hn^{2}(c)$. 
Then the curve $\Gamma : I \to T\hn^{2}(c)$ given by 
$\Gamma(t) = (\gamma(t), v(t) = e)$ satisfies that
$\Gamma(0) = (\tilde{x},e)$ and 
$\nabla_{\dot{\gamma}(t)} v(t) = 0$ for all $t \in I$.
Hence
\[
 f^{h} = \dot{\Gamma}(0) = (f, 0) = (\sqrt{c}/2)
 \tilde{e}_{1}.
\]

To construct the vertical lift $f^{v}$, 
we now consider a curve $\gamma : I \to T\hn^{2}(c)$
defined by 
$\gamma(t) = (\tilde{x}, (\cos t)e + (\sin t)f)$.
Then $\gamma(t)$ is a curve along the fibre over $\tilde{x}$
and satisfies that $\gamma(0) = (\tilde{x}, e)$ and 
$\dot{\gamma}(0) = (0, f)$.
Hence $\tilde{e}_{3} = (0, -2f) \in \v_{(\tilde{x},e)} \subset 
T_{(\tilde{x}, e)}(T^{1}\hn^{2}(c))
\subset T_{(\tilde{x}, e)}(T\hn^{2}(c))$. 
Moreover, for the connection map we have
\begin{align*}
 K_{(\tilde{x}, e)} (-\tilde{e}_{3}/2)  & = 
 \left.\frac{d}{dt}\right|_{t=0} 
 (\exp_{\tilde{x}} \circ R_{- e} \circ\tau)(\gamma(t))  \\
& = \left.\frac{d}{dt}\right|_{t=0} 
\big[ \exp_{\tilde{x}} ((\cos t) -1) e + (\sin t)f)\big] .
\end{align*}
Noting that the geodesic of $\hn^{2}(c)$ starting from
$\tilde{x}$ with unit initial vector $v$ is given by
$\delta_{(\tilde{x}, v)}(s) = \cosh(\sqrt{c}s)\tilde{x} + 
\sinh(\sqrt{c}s)(v/\sqrt{c})$,
we then see
\begin{align*}
 & \exp_{\tilde{x}}((\cos t -1)e + (\sin t)f)   \\
 & \quad  =  \cosh(\sqrt{c}\,\theta(t))\tilde{x} + 
 \frac{\sinh(\sqrt{c}\,\theta(t))}{\sqrt{c}} 
 \left( \frac{(\cos t -1)e + (\sin t)f}{\theta(t)} \right)
 = \Theta(t),
\end{align*}
where
\[
 \theta(t) = \left| (\cos t -1) e + (\sin t)f 
 \right|_{\mathbb{R}^{3}_{1}}
= \sqrt{2(1 - \cos t)}.
\]
Therefore we obtain
\begin{equation*}
 K_{(\tilde{x}, e)}(-\tilde{e}_{3}/2) = 
 \left.\frac{d}{dt}\right|_{t=0} \Theta(t) = f,
\end{equation*}
which shows that $\tilde{e}_{3} = -2f^{v}$.
\end{proof}

\subsection{Indefinite generalized Cheeger-Gromoll metrics}
\label{sect:5.4}
We extend the notion of the generalized Cheeger-Gromoll metric 
$h_{m,r}$ defined in \S \ref{sect:3.3} to admit indefinite ones.

More specifically, for the hyperbolic plane $\mathbb{H}^{2}(c)$,
we define on its tangent bundle $T\mathbb{H}^{2}(c)$ 
the {\em indefinite generalized Cheeger-Gromoll metric} $h_{m,r}$
as follows.
Given $m \in \mathbb{R}$ and $r \geq 0$, 
we set on each tangent space $T_{(x, e)}(T\mathbb{H}^{2}(c))$
\begin{equation}\label{eqn:5.14}
\begin{aligned}
 h_{m,r} (X^{h} , Y^{h} ) & = \langle X , Y \rangle,
 \quad  h_{m,r} (X^{h} , Y^{v} ) = 0,  \\
 h_{m,r} (X^{v} , Y^{v} ) & = - \omega^{m} 
 (\langle X , Y \rangle + r \langle X , e \rangle\langle  Y, 
 e \rangle),
\end{aligned}
\end{equation}
where  $X, Y \in T_{x}\hn^{2}(c)$ and $\omega = 1/(1+ |e|^{2})$.
It should be noted that, equipped with $h_{m,r}$ on
$T\mathbb{H}^{2}(c)$ and the canonical metric 
$\langle \ , \,\rangle$ on $\mathbb{H}^{2}(c)$, 
the canonical projection
$\pi : T\mathbb{H}^{2}(c) \to \mathbb{H}^{2}(c)$ yields
a submersion which is isometric on horizontal directions.
Moreover, when $(x, e) \in T^{1}\hn^{2}(c)$, 
this metric restricts on $T_{(x,e)}(T^{1}\hn^{2}(c))$ to
\begin{equation}\label{eqn:5.15}
\begin{aligned}
 h_{m,r} (X^{h}, Y^{h}) & = \langle X , Y \rangle,
 \quad  h_{m,r} (X^{h}, Y^{v}) = 0,  \\
 h_{m,r} (X^{v}, Y^{v}) & = - \frac{1}{2^{m}} 
 \langle X , Y \rangle.
\end{aligned}
\end{equation}
Note that the parameter $r$ disappears when restricted to 
the unit tangent bundle, and $h_{m,r}$ has a negative signature
on vertical directions.

With these understood, the proof of Theorem~\ref{thm:2}
is immediate.
Indeed, if we choose $m = \log_{2} c$, then, it follows from 
\eqref{eqn:5.12} and \eqref{eqn:5.13} together with
\eqref{eqn:5.15} that
\begin{align*}
 & h_{m,r}((\sqrt{c}/2)\tilde{e}_{1}, (\sqrt{c}/2)\tilde{e}_{1}) 
 = h_{m,r}(f^{h} , f^{h} ) = \langle f, f \rangle =1,   \\[0.1cm]
 & h_{m,r}((\sqrt{c}/2)\tilde{e}_{2}, (\sqrt{c}/2)\tilde{e}_{2}) 
 = h_{m,r}(e^{h} , e^{h} ) = \langle e, e \rangle = 1,  \\[0.1cm]
 & h_{m,r}((\sqrt{c}/2)\tilde{e}_{1}, (\sqrt{c}/2)\tilde{e}_{2}) 
 = h_{m,r}(f^{h}, e^{h}) = \langle f, e \rangle= 0,   \\[0.1cm]
 & h_{m,r}((\sqrt{c}/2)\tilde{e}_{2}, (\sqrt{c}/2)\tilde{e}_{3}) 
 = - h_{m,r}(e^{h}, \sqrt{c}f^{v}) = 0,  \\[0.1cm]
 & h_{m,r}((\sqrt{c}/2)\tilde{e}_{1}, (\sqrt{c}/2)\tilde{e}_{3}) 
 = - h_{m,r}(f^{h}, \sqrt{c}f^{v}) = 0,
\end{align*}
and
\begin{equation*}
 h_{m,r}((\sqrt{c}/2)\tilde{e}_{3}, (\sqrt{c}/2)\tilde{e}_{3}) 
 = h_{m,r}(-\sqrt{c}f^{v}, -\sqrt{c}f^v) 
 = - \frac{c}{2^{m}}\langle f , f \rangle = -1.
\end{equation*}
Consequently, the covering map 
$F : H^{3}_{1}(c/4) \to T^{1}\mathbb{H}^{2}(c)$ defined
by \eqref{eqn:5.8} gives rise to an isometric immersion from
$(H^{3}_{1}(c/4), g_{\mathrm{can}})$ to
$(T^{1}\mathbb{H}^{2}(c), h_{m,r})$ for $m = \log_{2} c$
and $r \geq 0$.

\bigskip

\end{document}